\DeclareFontFamily{U}{mathx}{\hyphenchar\font45}
\DeclareFontShape{U}{mathx}{m}{n}{
      <5> <6> <7> <8> <9> <10>
     <10.95> <12> <14.4> <17.28> <20.74> <24.88>
    mathx10
      }{}
\DeclareSymbolFont{mathx}{U}{mathx}{m}{n}
\DeclareMathAccent{\widecheck}{\mathalpha}{mathx}{"71}
\newtheorem{thm}{Theorem}[section]
\newtheorem{corollary}[thm]{Corollary}
\newtheorem{lemma}[thm]{Lemma}
\newtheorem{proposition}[thm]{Proposition}
\newtheorem{conjecture}[thm]{Conjecture}
\newtheorem{thm-dfn}[thm]{Theorem-Definition}
\newtheorem{remark}[thm]{Remark}
\newtheorem{theorem}{Theorem}[section]
\numberwithin{equation}{section}
\newcommand{\fp}{{\mathfrak p}}
\newcommand{\fa}{{\mathfrak a}}
\newcommand{\fF}{{\mathfrak{F}}}
\newcommand{\Lp}{{\mathfrak{p}}}
\newcommand{\Lb}{{\mathfrak{b}}}
\newcommand{\Lt}{{\mathfrak{t}}}
\newcommand{\Lg}{{\mathfrak g}}
\newcommand{\Ln}{{\mathfrak{n}}}
\newcommand{\Ll}{{\mathfrak{l}}}
\newcommand{\rf}{{\mathrm f}}
\newcommand{\rn}{{\mathrm n}}
\newcommand{\bC}{{\mathbb C}}
\newcommand{\bZ}{{\mathbb Z}}
\newcommand{\bP}{{\mathbb P}}
\newcommand{\bN}{{\mathbb N}}
\newcommand{\calF}{{\mathcal F}}
\newcommand{\cO}{{\mathcal O}}
\newcommand{\cA}{{\mathcal A}}
\newcommand{\cF}{{\mathcal F}}
\newcommand{\cN}{{\mathcal N}}
\newcommand{\cH}{{\mathcal H}}
\newcommand{\cT}{{\mathcal T}}
\newcommand{\cP}{{\mathcal P}}
\newcommand{\cE}{{\mathcal E}}
\newcommand{\cM}{{\mathcal{M}}}
\newcommand{\cL}{{\mathcal{L}}}
\newcommand{\on}{\operatorname}
\newcommand{\Loc}{\on{LocSys}}
\newcommand{\nc}{\newcommand}
\nc{\al}{{\alpha}} \nc{\be}{{\beta}} \nc{\ga}{{\gamma}}
\nc{\ve}{{\varepsilon}} \nc{\Ga}{{\Gamma}} 
\nc{\La}{{\fa}}
\nc{\ad }{{\on{ad }}}
\nc{\aff}{{\on{aff}}} \nc{\Aff}{{\mathbf{Aff}}}
\nc{\der}{{\on{der}}}
\nc{\diag}{{\on{diag}}}
\nc{\Fl}{{\calF\ell}}
\nc{\Hg}{{\on{Higgs}}}
\nc{\Id}{{\on{Id}}}
\nc{\Ind}{{\on{Ind}}}
\nc{\Op}{{\on{Op}}}
\nc{\res}{{\on{res}}}
\nc{\tr}{{\on{tr}}}
\nc{\GSp}{{\on{GSp}}} \nc{\GU}{{\on{GU}}} \nc{\SL}{{\on{SL}}}
\nc{\SU}{{\on{SU}}} \nc{\SO}{{\on{SO}}}
\nc{\nh}{{\Loc_{J^p}(\tau')}}
\nc{\bnh}{{\Loc_{\breve J^p}(\tau')}}
\nc{\bU}{{\overline{U}}} 
\nc{\IC}{{\on{IC}}}
\nc{\op}{{\operatorname{P}}}
\newcommand{\br}{\begin{rouge}}
\newcommand{\er}{\end{rouge}}
\newcommand{\bb}{\begin{bluet}}
\newcommand{\eb}{\end{bluet}}
\newcommand{\bfd}{{\mathbf d}}
\nc{\ot}{\otimes}
\nc{\oh}{{\operatorname{H}}}
\nc{\gr}{{\operatorname{gr}}}
\nc{\rk}{{\operatorname{rank}}}
\nc{\codim}{{\operatorname{codim}}}
\nc{\img}{{\operatorname{Im}}}
\nc{\Span}{{\operatorname{Span}}}
\nc{\Img}{\operatorname{Im}}
\nc{\Char}{\operatorname{Char}}
\newcommand{\beqn}{\begin{equation*}}
\newcommand{\eeqn}{\end{equation*}}
\newcommand{\beq}{\begin{equation}}
\newcommand{\eeq}{\end{equation}}
\newcommand{\bega}{\begin{gathered}}
\newcommand{\eega}{\end{gathered}}
\newcommand{\bern}{\begin{eqnarray*}}
\newcommand{\eern}{\end{eqnarray*}}
\newcommand{\ber}{\begin{eqnarray}}
\newcommand{\eer}{\end{eqnarray}}
\nc{\nmod}{\hspace{-.1in}\mod}
\begin{document}
\title[Character sheaves for classical graded  Lie algebras]{Character sheaves for classical graded  Lie algebras}
       
         \author{Ting Xue}
         \address{ School of Mathematics and Statistics, University of Melbourne, VIC 3010, Australia, and Department of Mathematics and Statistics, University of Helsinki, Helsinki, 00014, Finland}
         \email{ting.xue@unimelb.edu.au}
\thanks{Ting Xue was supported in part by the ARC grants DP150103525.}

\begin{abstract}In this note we study character sheaves for graded Lie algebras arising from inner automorphisms of special linear groups and Vinberg's type II classical graded Lie algebras.
\end{abstract}

\maketitle

\section{Introduction}In~\cite{VX} we initiated a study of character sheaves for $\bZ/m\bZ$-graded Lie algebras. The invariant theory of graded Lie algebras was studied by Vinberg in~\cite{V} where the classical graded Lie algebras were divided into types I, II and III. In~\cite{VX,VX2} we focus on type I classical graded Lie algebras (and some exceptional cases), which is the most complicated type in the following sense. There are families of cuspidal character sheaves, with full support, associated to irreducible representations of various Hecke algebras of complex reflections groups at roots of unity.  In this note we study character sheaves in the cases of gradings arising from inner automorphisms of special linear groups, which we refer to as type AI, and Vinberg's type II classical graded Lie algebras. The latter can be viewed as the simplest type, in the sense that we do not expect the existence of cuspidal character sheaves. For type III, we expect that cuspidal character sheaves are rare and further that they all have nilpotent support, as in Lusztig's generalised Springer correspondence~\cite{L1}. We will deal with type III and make connection (in all types) to the Lusztig-Yun work~\cite{LY} in future publications.

Let us briefly recall the set-up. Let $G$ be a reductive algebraic group and $\theta:G\to G$ an order $m$ automorphism of $G$. We have a decomposition $\Lg=\oplus_{i\in\bZ/m\bZ}\Lg_i$  of $\Lg=\on{Lie}G$ into eigenspaces of $d\theta$, where $d\theta|_{\Lg_i}=\zeta_m^i$ and we write $\zeta_k=e^{2\pi\mathbf{i}/k}$ for $k\in\bZ_{+}$. Let $K=(G^\theta)^0$. By Vinberg~\cite{V}, the invariant theory of $K$-action on $\Lg_1$ is analogous to that of the adjoint action of $G$ on $\Lg$. In particular, there exists a Cartan subspace $\fa\subset\Lg_1$ consisting of semisimple elements such that $\bC[\Lg_1]^K\cong\bC[\fa]^{W_\fa}$, where $W_\fa\cong N_K(\fa)/Z_K(\fa)$ is the little Weyl group. The group $W_\fa$ is in general a complex reflection group. We are interested in describing explicitly the simple $K$-equivariant perverse sheaves on $\Lg_1$ that are Fourier transforms of simple $K$-equivariant perverse sheaves on $\cN_{-1}=\Lg_{-1}\cap\cN$, where $\cN$ is the nilpotent cone of $\Lg$. Here Fourier transform refers to the functor $\fF:\on{Perv}_K(\Lg_{-1})\to\on{Perv}_K(\Lg_{1})$, where we have identified $\Lg_1$ with $\Lg_{-1}^*$. We call these perverse sheaves character sheaves for graded Lie algebras, and write $\on{Char}_K(\Lg_1)$ for them. Let $\cA_K(\cN_{-1})$ denote the set of simple $K$-equivariant perverse sheaves on $\cN_{-1}$, called nilpotent orbital complexes. We have $\on{Char}_K(\Lg_1)=\fF(\cA_K(\cN_{-1}))$ by definition. A character sheaf is called cuspidal if it does not arise as a direct summand (up to degree shift) from parabolic induction of character sheaves in $\on{Char}_{(L^\theta)^0}(\Ll_1)$ for any $\theta$-stable Levi subgroup $L$ contained in a $\theta$-stable parabolic subgroup. We write $\on{Char}_K^{cusp}(\Lg_1)$ for them. We also write $\on{Char}_K^{\rn}(\Lg_1)$ (resp. $\on{Char}_K^{\rf}(\Lg_1)$) for the set of nilpotent support (resp. full support) character sheaves.

In this note we describe the character sheaves explicitly (Theorem~\ref{cs-sl} for type AI and Theorem~\ref{cs} for type II) under the assumption that the conjectural description (Conjecture~\ref{conj-nilp} and Conjecture~\ref{conj-biorbital}) of the set $\on{Char}_K^{\rn}(\Lg_1)$ of nilpotent support character sheaves holds. We also give a conjectural description of cuspidal character sheaves for type AI (Conjecture~\ref{conj-cusp}). Recall that we expect that there are no cuspidal character sheaves in type II. In Section~\ref{sec-pre} we recall the graded Lie algebras considered here and write down the quiver description of the $K$-action on $\Lg_1$ following~\cite{V,Y}. We also recall some braid group representations that will be used to describe the character sheaves. In Section~\ref{sec-nilp} we parametrize the nilpotent orbits and determine their equivariant fundamental groups. In particular, we determine the distinguished nilpotent orbits. We also write down the total number of nilpotent orbits (Lemma~\ref{lem-nb1}) and that of distinguished orbits (Lemma~\ref{lem-nb2}). We discuss the dual strata which give rise to supports of character sheaves. In Section~\ref{sec-cs-sl} we deal with inner automorphisms of special linear groups. As in~\cite{VX3} we make use of a generalised nearby cycle construction and parabolic induction to produce character sheaves. We note that the character sheaves with trivial central character have been described in~\cite{L}. In Section~\ref{cs-typeII} we discuss type II. In particular, we expect that all nilpotent support character sheaves arise from parabolic induction of $\theta$-stable Borel subgroups. 

{\bf Acknowledgement.} I  thank Pavel Etingof, Oscar Kivinen, Ivan Losev, George Lusztig, Emily Norton,  Peng Shan, Cheng-Chiang Tsai, Kari Vilonen, and Zhiwei Yun for helpful discussions. Special thanks are due to Dennis Stanton for teaching me the counting arguments.

\section{Preliminaries}\label{sec-pre}

\subsection{Classical graded  Lie algebras}\label{ssec-gla}

We recall  the explicit description of gradings arising from inner automorphisms of special linear groups and type II classical graded Lie algebras,  following  Vinberg \cite{V}. Let $\theta:G\to G$ be an order $m$ automorphism. Recall $K=(G^\theta)^0$ and $\zeta_k=e^{2\pi\mathbf{i}/k}$ for $k\in\bZ_{+}$.   

{\bf Type AI.} Let $G=SL_V$, where $V$ is a complex vector space of dimension $N$. We can assume that $\theta(g)=\gamma g\gamma^{-1}$, where $\gamma\in G$ and $\gamma^m=1$. 

{\bf Type AII.} Let $G=SL_V$, where $V$ is a complex vector space of dimension $2n$ equipped with a non-degenerate bilinear form $(-,-)$ defined by $(v,w)=v^tAw$, $v,w\in V$. We can assume that $ \theta(g)=A^{-1}(g^t)^{-1}A$, $g\in G$. We have $\theta^2(g)=\gamma g\gamma^{-1}$, where $\gamma=A^{-1}A^t$. Then $m=2m_0$ for some odd $m_0\in\bZ_{+}$ and we can assume that $\gamma^{m_0}=-1$.

{\bf Type CII (resp. DII).} Let $G=Sp_V$ (resp. $SO_V$), where $V$ is a complex vector space of dimension $2n$  equipped with a non-degenerate symplectic (resp. symmetric bilinear) form $(\,,\,)$.  We can assume that 
$
\theta(g)=\gamma g\gamma^{-1},\,\gamma\in Sp_V \text{ (resp. $O_V$)}.
$
We have that $m$ is even and $\gamma^{m}=1$ (resp. $-1$).

In each case we have $V=\oplus V_\lambda$, where $V_\lambda:=\{v\in V\mid \gamma v=\lambda v\}$, $\lambda\in\bC$. 
Moreover,
\bern
&\Lg_k=\{x\in\Lg\mid xV_\lambda\subset V_{\zeta_{m_0}^k\lambda},\ (xv,w)+\zeta_{m}^k(v,xw)=0,\ \forall v,w\in V\}&\text{AII}
\\
&\Lg_k=\{x\in\Lg\mid xV_\lambda\subset V_{\zeta_{m}^k\lambda}\}&\text{AI,\,CDII}\\
&W_\fa=G_{m_0,1,r} \text{ (resp. $G_{m,1,r}$)}&\text{AII (resp. AI, CDII)}.
\eern
Let 
\bern
l=(m_0-1)/2\text{ (resp. $l=m/2$)}&&\text{AII (resp. CDII)}\\
\text{ and $M_i=V_{\xi_{m}^{-i}}$ (resp. $V_{\xi_{m}^{1-2i}}$, resp. $V_{\xi_{2m}^{1-2i}}$)}&&\text{AI, CII (resp. AII, resp. DII)}.
\eern
We have
\beqn
r=\on{dim}\fa=\on{min}\{\on{dim}M_i\}\text{ (resp. $\min\{[\frac{\on{dim}M_i}{2}]\}$)}\ \ \text{ AI (resp. ACDII)}.
\eeqn

We have the following description of the pair $(K,\Lg_1)$ using quivers, following~\cite{Y} (see also~\cite{YY}). We write $\mathbf{d}=(d_i)$, $d_i=\on{dim}M_i$, for the dimension vector of the quiver, and $|\mathbf{d}|=\sum d_i$.

{\bf Type AI.} We can identify $\Lg_1$ with the representations of the following cyclic quiver
\beqn
\xymatrix@R=2mm{&M_2\ar[dl]&\cdots\ar[l]&M_{k}\ar[l]&\\M_1\ar[dr]&&&&M_{k+1}\ar[ul]\\&M_m\ar[r]&\cdots\ar[r]&M_{k+2}\ar[ur]&}
\eeqn
We have
\bern
K\cong\prod_{j=1}^m GL_{M_j},\ \Lg_{1}\cong\bigoplus_{i=1}^m \on{Hom}(M_i,M_{i-1}).
\eern

{\bf Type AII.} We have  $(M_i,M_j)=0$ unless $i+j\equiv 1\mod m_0$, and $(,)|_{M_{l+1}}$ is a non-degenerate symplectic form. In particular, $d_{l+1}$ is even. We can identify $\Lg_1$ with the set of representations of the following cyclic quiver
\beqn
\xymatrix@R=2mm{&M_1\ar[dd]_{x_1}&M_2\ar[l]_{x_2}&\cdots\ar[l]&M_{l}\ar[l]_{x_{l}}\\&&&&&M_{l+1}\ar[ul]_-{x_{l+1}}\\&M_{2l+1}\ar[r]_{x_2^*}&M_{2l}\ar[r]&\cdots\ar[r]_{x_{l}^*}&M_{l+2}\ar[ur]_{x_{l+1}^*}}
\eeqn
such that $(x_iv_i,w_{m_0+2-i})+\zeta_m(v_i,x_i^*w_{m_0+2-i})=0$, $i\in[2,l+1]$, and $(x_1v_{1},w_{1})=-(x_1w_{1},v_{1})$, for all $v_j,w_j\in M_j$.
We have
\beqn
\begin{gathered}
K\cong \prod_{j=1}^lGL_{M_j}\times Sp_{M_{l+1}},\ 
\Lg_1\cong\bigoplus_{i=1}^l\on{Hom}(M_{i+1},M_{i})\oplus\wedge^2(M_{1}^*).
\end{gathered}
\eeqn
{\bf Type CII.} We have  $( M_i,M_j)=0$ unless $i+j\equiv 0\mod m$, $(\,,\,)|_{M_l}$ and $(\,,\,)|_{M_m}$ are both non-degenerate.  In particular both $d_l$ and  $d_m$ are even. We can identify $\Lg_1$ with the set of representations of the following cyclic quiver
\beqn
\xymatrix@R=2mm{&M_1\ar[dl]_{x_1}&M_2\ar[l]_{x_2}&\cdots\ar[l]&M_{l-1}\ar[l]_{x_{l-1}}\\M_{2l}\ar[dr]_{x_1^*}&&&&&M_l\ar[ul]_-{x_l}\\&M_{2l-1}\ar[r]_{x_2^*}&M_{2l-2}\ar[r]&\cdots\ar[r]_{x_{l-1}^*}&M_{l+1}\ar[ur]_{x_l^*}}
\eeqn
such that $(x_iv_i,w_{m+1-i})+(v_i,x_i^*w_{m+1-i})=0$, $i\in[1,l]$, for all $v_j,w_j\in M_j$.
We have
\beqn
\begin{gathered}
K\cong \prod_{j=1}^{l-1}GL_{M_j}\times Sp_{M_l}\times Sp_{M_m},\ 
\Lg_1\cong\bigoplus_{j=1}^l\on{Hom}(M_j,M_{j-1}).
\end{gathered}
\eeqn
{\bf Type DII.} We have  $(M_i,M_j)=0$ unless $i+j\equiv 1\mod m$.
We can identify $\Lg_1$ with the set of representations of the following cyclic quiver
\beqn
\xymatrix@R=4.5mm{&M_1\ar[d]_{x_1}&M_2\ar[l]_{x_2}&\cdots\ar[l]&M_l\ar[l]_{x_l}\\&M_{2l}\ar[r]_{x_2^*}&M_{2l-1}\ar[r]&\cdots\ar[r]_{x_l^*}&M_{l+1}\ar[u]_{x_{l+1}}}
\eeqn
such that $(x_iv_i,w_{m+2-i})+(v_i,x_i^*w_{m+2-i})=0$, $i\in[2,l]$, and $(x_1v_{1},w_{1})=-(x_1w_{1},v_{1})$, and $(x_{l+1}v_{l+1},w_{l+1})=-(x_{l+1}w_{l+1},v_{l+1})$, for all $v_j,w_j\in M_j$. We have
\beqn
\begin{gathered}
K\cong \prod_{j=1}^lGL_{M_j},\ \ 
\Lg_1\cong\bigoplus_{j=2}^{l}\on{Hom}(M_j,M_{j-1})\oplus \wedge^2(M_1^*)\oplus \wedge^2(M_l).
\end{gathered}
\eeqn
\subsection{Hecke algebras associated to complex reflection groups}\label{Hecke}We make use of the notations in~\cite[\S7.1]{VX}. Let $\iota|m$. Let $\cH^\iota_{{G_{m,1,k}}}$ denote the Hecke algebra associated to the complex reflection group $G_{m,1,k}$ defined as the quotient of the group algebra $\bC[B_{G_{m,1,k}}]$ by the ideal generated by the elements (see~\cite{BMR})
\beqn
\sigma_{H_{s_{ij}^{(p)}}}^2-1,\,1\leq i<j\leq k,\,0\leq p\leq m-1,\ \ (\sigma_{H_{\tau_i}}^\iota-1)^{m/\iota},\,1\leq i\leq k.
\eeqn
By~\cite[Section 3]{M}, the irreducible representations of $\cH_{G_{m,1,k}}^\iota$ are parametrized by $\iota$-partitions of $k$, that is, $\iota$-tuples of partitions $(\nu^1,\ldots,\nu^\iota)$ such that $\sum_{i=1}^\iota|\nu^i|=k$. We write $\cP_\iota(k)$ for the set of $\iota$-partitions of $k$, and write $\cP(k)=\cP_1(k)$. For each $\tau\in\cP_\iota(k)$, let $L_\tau$ denote the irreducible representation of $\cH_{G_{m,1,k}}^\iota$ corresponding to $\tau$. We will also write $L_\tau$ for the irreducible representation of $B_{G_{m,1,k}}$ obtained by pulling back $L_\tau$ via $\bC[B_{G_{m,1,k}}]\to\cH^l_{G_{m,1,k}}$.

\section{Nilpotent orbits and component groups}\label{sec-nilp}

In this section we describe the parametrization of $K$-orbits in $\cN_1$ (or equivalently, $\cN_{-1}$) and the component groups $A_K(x):=Z_K(x)/Z_K(x)^0$, $x\in\cN_1$ (or $\cN_{-1}$).

\subsection{Young $(k,\pm)$-diagrams}Fix a positive integer $k$. We say that a Young diagram is a $(k,{+})$-diagram (resp. $(k,-)$-diagram) if the rows of the Young diagram are filled with consecutive {\em decreasing} (resp. {\em increasing}) numbers from $\{1,\ldots,k\}$, where $k$ is identified with $0$.  Two Young $(k,\pm)$-diagrams are regarded equivalent if they can be obtained from each other by interchanging rows. We write 
\beq\label{yda}
\lambda^{\pm}=(\lambda_1)_1^{p_1^1}\cdots(\lambda_1)^{p_1^{k}}_k\cdots (\lambda_s)_1^{p_s^1}\cdots(\lambda_s)^{p_s^{k}}_{k}
\eeq
for the Young $(k,\pm)$-diagram such that $\lambda=(\lambda_1)^{p_1^1+\ldots +p_1^k}\cdots (\lambda_s)^{p_s^1+\ldots +p_s^k}$ is the underlying partition with $\lambda_1>\lambda_2>\cdots\lambda_s>0$, and there are $p_i^j$ rows of length $\lambda_i$ that start with a beginning box $j$. We write $|\lambda^{\pm}|=\sum_{i=1}^s(\sum_{j=1}^kp_i^j)\lambda_i$ and
\beqn
\mathbf{d}(\lambda^\pm)=(d_1(\lambda^\pm),\ldots,d_k(\lambda^\pm))
\eeqn
where $d_i(\lambda^\pm)$ denotes the number of boxes filled with $i$ in $\lambda^\pm$. 
 
We will make use of Young $(k,+)$ (resp. $(k,-)$)-diagrams to parametrize nilpotent orbits in $\cN_1$ (resp. $\cN_{-1}$). Fix a dimension vector $\mathbf{d}=(d_1,\ldots,d_k)$. Let us write $\Sigma_{k,\mathbf{d}}^{\pm}$ for the set of Young $(k,\pm)$-diagrams consisting of $\lambda^\pm$ such that $\mathbf{d}(\lambda^\pm)=\mathbf{d}$. Let $\mathbf{1}_k=(1,\ldots,1)\in\bN^k$. We will often omit the superscript $\pm$ when it is clear from the context.

\subsection{Nilpotent orbits and component groups of centralisers}The nilpotent $K$-orbits in $\cN_{\pm 1}$ are parametrized by 

{\bf Type AI.} The set $\Sigma_{m,\mathbf{d}}^{\pm}$

{\bf Type AII.} The subset $\Sigma_{m_0,\bfd}^{A,\pm}\subset \Sigma_{m_0,\bfd}^{\pm}$ consisting of Young $(m_0,\pm)$-diagrams of the form~\eqref{yda} such that
\ber\label{orbit-1}
&p_i^a=p_i^b\text{ if }a+b\equiv\lambda_i\nmod m_0,\ p_i^a\text{ is even if }2a\equiv\lambda_i\nmod m_0,\ i\in[1,s].
\eer

{\bf Type CII.} The subset $\Sigma_{m,\bfd}^{C,\pm}\subset \Sigma_{m,\bfd}^{\pm}$ consisting of Young $(m,\pm)$-diagrams of the form~\eqref{yda} such that
\ber\label{orbit-2}
&p_i^a=p_i^b\text{ if }a+b\equiv\lambda_i-1\nmod m,\ p_i^a\text{ is even if }2a\equiv\lambda_i-1\nmod m,\ i\in[1,s].
\eer 

{\bf Type DII.} The subset $\Sigma_{m,\bfd}^{D,\pm}\subset \Sigma_{m,\bfd}^{\pm}$ consisting of Young $(m,\pm)$-diagrams of the form~\eqref{yda} such that
\ber\label{orbit-3}
&p_i^a=p_i^b\text{ if }a+b\equiv\lambda_i\nmod m,\ p_i^a\text{ is even if }2a\equiv\lambda_i\nmod m,\ i\in[1,s].
\eer

Given a Young diagram $\lambda$, we write $\cO_\lambda$ for the corresponding nilpotent $K$-orbit. Let $x\in\cO_\lambda$. We have \ber
\label{comp-1}&A_K(x)=\bZ/d_\lambda\bZ,\ d_\lambda:=\on{gcd}(\lambda_1,\ldots,\lambda_s)&\text{type AI}\\
\label{comp-2}&A_K(x)=1&\text{type ACDII}.
\eer

The above statements for type AI is well-known. In the remainder of this subsection, we explain the parametrization of nilpotent orbits in type II and prove~\eqref{comp-2}. These are probably known to the experts. For completeness and to fix notations, we include them here. For $x\in\cO_\lambda$, let $\phi_x=(x,y,h)$ be a normal  $\mathfrak{sl}_2$-triple associated to $x$, where $h\in\Lg_0$. We work with $\cN_1$. The case of $\cN_{-1}$ is entirely similar.

Let $x\in\cN_1$. Suppose that $x$ has a Jordan block $v_a\xrightarrow{x} v_{a-1}\xrightarrow{x}\cdots\xrightarrow{x} v_{a-p+1}\xrightarrow{x}0$ of size $p$, where $v_a\in M_{a}$. Let $W_p=\on{span}\{v_{a-i},i\in[0,p-1]\}$. We show that $(\,,\,)|_{W_p}=0$. We have $(x^{p-1}v_a,x^jv_a)=(-1)^{p-1}c_p(v_a,x^{p-1+j}v_a)=0$ if $j\geq 1$, where $c_p=\zeta_m^{p-1}$ in type A, and $c_p=1$ in type CD.  It remains to show that $(x^{p-1}v_a,v_a)=0$.\\
 (Type AII)  
 Suppose first that $p=2k$. We have
$
(x^{2k-1}v_a,v_a)=(-\xi_m)^{k-1}(x^kv_a,x^{k-1}v_a).
$
If $2a-2k\not\equiv0\,\nmod m_0$, then $(x^kv_a,x^{k-1}v_a)=0$. If $2a-2k\equiv0\,\nmod m_0$, then $x^{k-1}v_a\in M_1$ and $(x^kv_a,x^{k-1}v_a)=-(x^kv_a,x^{k-1}v_a)$ implies that $(x^kv_a,x^{k-1}v_a)=0$. Suppose next that $p=2k+1$. 
We have
$
(x^{2k}v_a,v_a)=(-\xi_m)^{k}(x^kv_a,x^{k}v_a).$ 
If $2a-2k\not\equiv1\,\nmod m_0$, then $(x^kv_a,x^{k}v_a)=0$; if $2a-2k\equiv1\,\nmod m_0$, then $x_kv_a\in M_{l+1}$ and $(x^kv_a,x^{k}v_a)=0$ as $(,)|_{M_{l+1}}$ is symplectic.\\ 
(Type CII) 
 If $p$ is even, since $2a-p+1\not\equiv0\,\nmod m$, 
$
( x^{p-1}v_a,v_a)=0.
$
If $p$ is odd, then  $( x^{p-1}v_a,v_a)=(-1)^{\frac{p-1}{2}}( x^{\frac{p-1}{2}}v_a,x^{\frac{p-1}{2}}v_a)=0$.\\ 
(Type DII) If $p$ is even, then $(x^{p-1}v_a,v_a)=(-1)^{p-1}(v_a,x^{p-1}v_a)$ implies that $(x^{p-1}v_a,v_a)=0$. If $p$ is odd, $(x^{p-1}v_a,v_a)=(-1)^{(p-1)/2}(x^{(p-1)/2}v_a,x^{(p-1)/2}v_a)=0$, since $2a-p+1\not\equiv1\,\nmod m$.

It follows that all Jordan blocks appear in pairs. More precisely, if $x$ has a Jordan block $v_a\xrightarrow{x} v_{a-1}\xrightarrow{x}\cdots\xrightarrow{x} v_{a-p+1}\xrightarrow{x}0$ of size $p$, where $v_a\in M_{a}$, then $x$ has another Jordan block $v_b\xrightarrow{x} v_{b-1}\xrightarrow{x}\cdots\xrightarrow{x} v_{b-p+1}\xrightarrow{x}0$ of size $p$, where $v_b\in M_b$ and $a+b-p\equiv 0\,\nmod m_0$ (resp. $a+b-p+1\equiv0\,\nmod m$, $a+b-p\equiv0\,\nmod m$) in type AII (resp. CII, DII). Thus~\eqref{orbit-1},~\eqref{orbit-2} and~\eqref{orbit-3} hold.

 Let $x\in\cO_\lambda$, $\lambda\in\Sigma_{m_0}^A$.  We have $G^{\phi_x}\cong S(\prod_{j=1}^sGL_{\sum_{i=1}^dp_j^i})$. Note that $(x^{k-1}v,w)=-(x^{k-1}w,v)$ if $v,w\in M_a$ and $2a\equiv k\,\nmod m_0$. It follows that $\theta|_{G^{\phi_x}}$ is of type II.

 Let $x\in\cO_\lambda$, $\lambda\in\Sigma_m^C$.  We have $G^{\phi_x}\cong (\prod_{\lambda_j\text{ odd }}Sp_{\sum_{i=1}^dp_j^i})\times(\prod_{\lambda_j\text{ even}}O_{\sum_{i=1}^dp_j^i})$.  Since $2a\not\equiv\lambda_i-1\,\nmod m$ for even $\lambda_i$, $\theta|_{O_{\sum_{i=1}^dp_j^i}}$ is of type II.

 Let $x\in\cO_\lambda$, $\lambda\in\Sigma_m^D$. We have $G^{\phi_x}\cong (\prod_{\lambda_j\text{ even}}Sp_{\sum_{i=1}^dp_j^i})\times(\prod_{\lambda_j\text{ odd}}O_{\sum_{i=1}^dp_j^i})$.  Since $2a\not\equiv\lambda_i\,\nmod m$ for odd $\lambda_i$, $\theta|_{O_{\sum_{i=1}^dp_j^i}}$ is of type II. 
 
 We conclude that $A_K(x)\cong K^{\phi_x}/(K^{\phi_x})^0=1$.

\subsection{Number of nilpotent orbits in type II}Recall $m_0=2l+1$ and $m=2l$. Let 
\beqn
\Sigma_{m_0,2n}^A=\bigsqcup_{|\bfd|=2n}\Sigma_{m_0,\bfd}^A,\ \ \Sigma_{m,2n}^C=\bigsqcup_{|\bfd|=2n}\Sigma_{m,\bfd}^C,\ \  \Sigma_{m,2n}^D=\bigsqcup_{|\bfd|=2n}\Sigma_{m,\bfd}^D.
\eeqn
\begin{lemma}\label{lem-nb1}We have
\bern
&&\sum_n|\Sigma_{m_0,2n}^A|x^n=\prod_{k\geq 1}\frac{1}{(1-x^{k})^{l+1}},\ \ \sum_{n\geq 0}|\Sigma_{m,2n}^C|x^n=\prod_{k\geq 1}\frac{1+x^k}{(1-x^{k})^{l}}\\
&&\sum_{n\geq 0}|\Sigma_{m,2n}^D|x^n=\prod_{k\geq 1}\frac{1}{(1-x^{k})^{l+1}(1+x^k)}.
\eern
\end{lemma}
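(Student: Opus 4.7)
The plan is to reduce the count in each type to a product over part sizes. The constraints \eqref{orbit-1}--\eqref{orbit-3} defining $\Sigma^A_{m_0,\bfd}$, $\Sigma^C_{m,\bfd}$, $\Sigma^D_{m,\bfd}$ are imposed separately on each distinct row-length $\lambda_i$: for a Young $(k,\pm)$-diagram $\lambda^\pm = (\lambda_1)_1^{p_1^1}\cdots(\lambda_s)_k^{p_s^k}$, the tuple $(p_i^1,\ldots,p_i^k)$ is constrained only in terms of $\lambda_i$ and $k$, with no coupling across different $i$. Hence
\[
\sum_n |\Sigma^X_{k,2n}|\,x^n \;=\; \prod_{\lambda \geq 1} F^X_\lambda(x),
\]
where $F^X_\lambda(x)$ counts valid tuples $(p^1,\ldots,p^k) \in \bN^k$ (under the relevant one of \eqref{orbit-1}--\eqref{orbit-3} with $\lambda_i$ replaced by $\lambda$) weighted by $x^{\lambda(\sum_a p^a)/2}$.

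For fixed $\lambda$, consider the involution $\iota_\lambda : a \mapsto c - a \bmod k$ on $\bZ/k\bZ$, where $c = \lambda$ in types A and D and $c = \lambda - 1$ in type C. The condition forces $p^a = p^b$ on each $\iota_\lambda$-orbit of size $2$ and $p^a \in 2\bN$ on each fixed point; in either case the contribution to $\sum_a p^a$ is $2$ times a free non-negative integer, so each pair and each fixed point contributes a factor $(1-x^\lambda)^{-1}$. Thus $F^X_\lambda(x) = (1-x^\lambda)^{-(p(\lambda)+f(\lambda))}$, where $p(\lambda)$ and $f(\lambda)$ denote the numbers of pairs and fixed points of $\iota_\lambda$ on $\bZ/k\bZ$.

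In type AII, $k = m_0$ is odd, so $f(\lambda) = 1$ and $p(\lambda) = l$ uniformly, giving $F^A_\lambda(x) = (1-x^\lambda)^{-(l+1)}$ and the first formula. In types CII and DII, $k = m = 2l$ is even, so $\iota_\lambda$ has $f(\lambda) = 2$ fixed points (and $p(\lambda) = l-1$ pairs) exactly when $c$ is even, and $f(\lambda) = 0$ (with $p(\lambda) = l$) when $c$ is odd; for type C the ``$c$ even'' case corresponds to $\lambda$ odd (so exponent $l+1$) and ``$c$ odd'' to $\lambda$ even (exponent $l$), while for type D the roles of odd and even $\lambda$ are swapped. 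Multiplying out and applying Euler's identity $\prod_{\lambda\geq 1}(1-x^{2\lambda-1})^{-1} = \prod_{k\geq 1}(1+x^k)$ converts the three resulting expressions into the claimed closed forms.

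There is no substantial obstacle here; the argument is elementary generating-function bookkeeping. The one point worth double-checking is that $\lambda\sum_a p^a$ is always even, so that $|\lambda^\pm|/2 \in \bN$ and the exponent $n$ of $x$ is a genuine integer. This is immediate from the pair/fixed-point decomposition, since each pair contributes $2p$ and each fixed point contributes $p^{a_0} \in 2\bN$ to $\sum_a p^a$.
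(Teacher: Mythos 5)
Your proof is correct and follows essentially the same strategy as the paper: the count factors over distinct part sizes because the constraints \eqref{orbit-1}--\eqref{orbit-3} decouple across $i$, and for each part size the involution $a\mapsto c-a$ on $\bZ/k\bZ$ reduces the count of valid tuples $(p^a)$ to counting free non-negative integer tuples (pairs plus fixed points), giving $(1-x^\lambda)^{-(p(\lambda)+f(\lambda))}$ per part. The paper packages the same computation by first defining weights $\omega_\lambda = \prod_i \binom{k_i+l}{l}$ (etc.) attached to partitions of $n$ and then recognizing these as coefficients of the appropriate rational functions; you instead write down the per-part generating function directly, which is a minor presentational difference rather than a different method.
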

\begin{proof}
We can count $|\Sigma_{m_0,2n}^A|$ as follows. For a partition $\lambda=(\lambda_1)^{k_1}\cdots(\lambda_s)^{k_s}$ of $n$, we associate weight $\omega_\lambda^A=\prod_{i=1}^s{k_i+l\choose l}$. Then we have $|\Sigma_{m_0,2n}^A|=\sum_{\lambda\in\cP(n)}\omega_\lambda^A$. Note that $\sum_{k=0}^\infty{k+l\choose l}t^k=\frac{1}{(1-t)^{l+1}}$. Thus
\beqn
\sum_n |\Sigma_{m_0,2n}^A|x^n=\prod_{k\geq 1}\sum_{j=0}^\infty x^{kj}{j+l\choose l}=\prod_{k\geq 1}\frac{1}{(1-x^k)^{l+1}}.
\eeqn
Similarly, for a partition $\lambda=(\lambda_1)^{k_1}\cdots(\lambda_s)^{k_s}$ of $n$, we associate weight $$\omega_\lambda^C=\prod_{\substack{i=1,\ldots,s\\\lambda_i\text{ odd}}}{k_i+l\choose l}\prod_{\substack{i=1,\ldots,s\\\lambda_i\text{ even}}}{k_i+l-1\choose l-1}\text{ (resp. $\omega_\lambda^D=\prod_{\substack{i=1,\ldots,s\\\lambda_i\text{ even}}}{k_i+l\choose l}\prod_{\substack{i=1,\ldots,s\\\lambda_i\text{ odd}}}{k_i+l-1\choose l-1}$)}.$$
We have $|\Sigma_{m,2n}^C|=\sum_{\lambda\in\cP(n)}\omega_\lambda^C$ (resp. $|\Sigma_{m,2n}^D|=\sum_{\lambda\in\cP(n)}\omega_\lambda^D$). Thus
\bern
\sum_n |\Sigma_{m,2n}^C|x^n&=&\prod_{\substack{k\geq 1\\k\text{ odd}}}\sum_{j=0}^\infty x^{kj}{j+l\choose l}\prod_{\substack{k\geq 1\\k\text{ eveb}}}\sum_{j=0}^\infty x^{kj}{j+l-1\choose l-1}\\
&=&\prod_{\substack{k\geq 1\\k\text{ odd}}}\frac{1}{(1-x^k)^{l+1}}\prod_{\substack{k\geq 1\\k\text{ even}}}\frac{1}{(1-x^k)^l}=\prod_{k\geq 1}\frac{1+x^k}{(1-x^k)^l}\\
\sum_n |\Sigma_{m,2n}^D|x^n&=&\prod_{\substack{k\geq 1\\k\text{ even}}}\frac{1}{(1-x^k)^{l+1}}\prod_{\substack{k\geq 1\\k\text{ odd}}}\frac{1}{(1-x^k)^l}=\prod_{k\geq 1}\frac{1}{(1-x^k)^{l+1}(1+x^k)}.
\eern 
\end{proof}

 \subsection{Distinguished nilpotent orbits}\label{ssec-dist} Let $x\in\cN_{-1}$. We say that $x$ is distinguished if $\Lg_1^x:=\{y\in\Lg_1\mid[x,y]=0\}$ consists of nilpotent elements, that is, $\Lg_1^x\subset\cN_1$. The $K$-orbit of $x$ is called a distinguished orbit. Similarly we define distinguished nilpotent orbits in $\cN_1$. In what follows we describe the distinguished nilpotent orbits and define some subsets of these orbits in type AI. 
 
{\bf Type AI.} Let $a\in\bZ_+$ and $d:=\on{gcd}(a,m)$. Let ${}^{0}_a\Sigma_{m,\bfd}\subset\Sigma_{m,\bfd}$ denote the subset consisting of Young diagrams $\lambda$ 
such that $a|d_\lambda$ (see~\eqref{comp-1} for the definition of $d_\lambda$) and such that
\beq\label{eqn-biorb}
\prod_{k=0}^{m/d-1} p_j^{i+kd}=0\text{ for each }j\in[1,s] \text{ and each }i\in[1,d].
\eeq
We write $\displaystyle{{}^{0}_a\Sigma_{m,N}=\bigsqcup_{|\bfd|=N}{}^{0}_a\Sigma_{m,\bfd}}$. Note that ${}^{0}_a\Sigma_N\neq\emptyset$ only if $d<m$.

{\bf Type ACDII.} Let  ${}^0\Sigma_{m_0,\bfd}^A\subset\Sigma_{m_0,\bfd}^A$ (resp. ${}^0\Sigma_{m,\bfd}^C\subset\Sigma_{m,\bfd}^C$, ${}^0\Sigma_{m,\bfd}^D\subset\Sigma_{m,\bfd}^D$) denote the subset consisting of Young $m_0$-diagrams (resp. $m$-diagrams) such that
\bern
\min\{p_i^j,j\in[1,m_0]\}\leq 1\text{ (resp. $\min\{p_i^j,j\in[1,m]\}\leq 1$)},\ i\in[1,s].
\eern
Similarly, we define ${}^0\Sigma_{m_0,2n}^A\subset\Sigma_{m_0,2n}^A$ (resp. ${}^0\Sigma_{m,2n}^C\subset\Sigma_{m,2n}^C$, ${}^0\Sigma_{m,2n}^D\subset\Sigma_{m,2n}^D$).  

The set of distinguished orbits are 
\bern
\{\cO_\lambda\mid\lambda\in{}^{0}_1\Sigma_{m,\bfd}\}\text{ AI},\ \ \{\cO_\lambda\mid\lambda\in{}^0\Sigma_{m_0,\bfd}^A\text{ (resp. ${}^0\Sigma_{m,\bfd}^C,{}^0\Sigma_{m,\bfd}^D$)}\}\text{ AII (resp. CII, DII)}.
\eern
\begin{remark}
The set ${}^{0}_1\Sigma_N$ coincides with the set of  {\em aperiodic} orbits in~\cite{L}.
\end{remark}

\begin{lemma}\label{lem-nb2}{\rm (i)} We have
\beqn
\sum_{N\geq 0} |{}^{0}_a\Sigma_{m,N}|x^{N/a}=\frac{\prod_{k\geq 1}(1-x^{\frac{m}{d}k})^d}{\prod_{k\geq 1}(1-x^{k})^m}.
\eeqn
{\rm (ii)} We have
\bern
&&\sum_{n\geq 0}|{}^0\Sigma_{m_0,2n}^A|x^n=\prod_{k\geq 1}\frac{(1-x^{(2l+1)k})}{(1-x^k)^{l+1}},\ \ 
\sum_{n\geq 0}|{}^0\Sigma_{m,2n}^C|x^n=\prod_{k\geq 1}\frac{(1-x^{2lk})(1+x^k)}{(1-x^k)^{l}}\\
&&\sum_{n\geq 0}|{}^0\Sigma_{m,2n}^D|x^n=\prod_{k\geq 1}\frac{(1-x^{2lk})}{(1-x^k)^{l+1}(1+x^k)}.
\eern
\end{lemma}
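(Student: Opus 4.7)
The plan is to mirror the proof of Lemma~\ref{lem-nb1}: for each choice of underlying partition $\lambda = (\lambda_1)^{k_1}\cdots(\lambda_s)^{k_s}$ with distinct $\lambda_j$'s, compute the generating function in a formal variable $t$ that enumerates the ways of decorating a block of $k_j$ rows of length $\lambda_j$ subject to the distinguished condition, then substitute $t$ by a suitable power of $x$ and multiply over $\lambda_j \geq 1$.

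For part (i), the condition $a \mid d_\lambda$ forces $a \mid \lambda_j$ for each $j$, so write $\lambda_j = a\mu_j$ with $\mu_j \geq 1$ and note that $N/a = \sum_j k_j \mu_j$. Fixing $\lambda_j$, split the coordinates $(p_j^1,\ldots,p_j^m)$ into their $d$ residue classes modulo $d$ and set $n_{j,i} = \sum_{k=0}^{m/d-1} p_j^{i+kd}$; then the constraint~\eqref{eqn-biorb} decouples into $d$ independent requirements, each asking that some entry in a block of $m/d$ non-negative integers (summing to $n_{j,i}$) vanish. Such compositions have generating function $\frac{1}{(1-t)^{m/d}} - \frac{t^{m/d}}{(1-t)^{m/d}} = \frac{1 - t^{m/d}}{(1-t)^{m/d}}$, and raising to the $d$-th power gives the generating function $\frac{(1-t^{m/d})^d}{(1-t)^m}$ for a fixed $\lambda_j$. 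Substituting $t = x^{\mu_j}$ and multiplying over $\mu_j \geq 1$ yields the claimed formula.

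For part (ii), recall from the proof of Lemma~\ref{lem-nb1} that for each type and each row length $\lambda_j$, a valid coordinate vector $(p_j^i)$ is determined by $l+1$ or $l$ independent non-negative integers (the pair representatives together with, in certain parities, one or two ``fixed'' coordinates of the form $2q$). The distinguished condition $\min_i p_j^i \leq 1$ is complementary to $\min_i p_j^i \geq 2$, which forces each pair representative to be $\geq 2$ and each $q$ to be $\geq 1$; after the uniform substitutions (pair rep $\mapsto$ pair rep $+\,2$ and $q \mapsto q+1$) the index $n_j$ decreases by $2l+1 = m_0$ in type AII and by $2l = m$ in types CII and DII, independently of the parity of $\lambda_j$. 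The complement generating functions are therefore $\frac{t^{m_0}}{(1-t)^{l+1}}$ (type AII) or $\frac{t^m}{(1-t)^{l+1}}$ or $\frac{t^m}{(1-t)^l}$ (types CII, DII, according to the parity of $\lambda_j$); subtracting from the Lemma~\ref{lem-nb1} totals gives the per-$\lambda_j$ distinguished generating function. Substituting $t = x^{\lambda_j}$, multiplying over $\lambda_j \geq 1$, and finally applying Euler's identity $\prod_{k \geq 1}(1 + x^k) = \prod_{k \geq 1,\ k\text{ odd}}(1-x^k)^{-1}$ to combine the odd-$\lambda_j$ and even-$\lambda_j$ factors in types CII and DII produces the stated identities.

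The argument is essentially routine once Lemma~\ref{lem-nb1} is in hand. The only real subtlety is to observe that, although the number of pair representatives and fixed coordinates depends on the parity of $\lambda_j$, the total shift $m_0$ (resp.\ $m$) for the complement generating function is uniform within each type, and that the parity-split product in types CII, DII admits a clean rewriting via Euler's identity.
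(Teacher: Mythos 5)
Your proposal is correct and follows essentially the same route as the paper's proof: for each part $\lambda_j$ one records the per-part generating function for the distinguished condition (via inclusion–exclusion against the "all coordinates $\geq 2$" or "some coordinate in each residue class is zero" complement), substitutes $t = x^{\lambda_j}$ (or $x^{\mu_j}$ in part (i)), multiplies over parts, and finishes with Euler's identity in the CII and DII cases exactly as in Lemma~\ref{lem-nb1}. Your remark that the complement shift is uniformly $m_0$ (resp.\ $m$) regardless of the parity of $\lambda_j$ is the same observation the paper encodes in its choice of numerators $1-t^{2l+1}$ and $1-t^{2l}$.
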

\begin{proof}
We have $|{}^{0}_a\Sigma_{m,N}|=\sum_{\lambda\in\cP(N/a)}\omega_\lambda$, where for $\lambda=(\lambda_1)^{k_1}\cdots(\lambda_s)^{k_s}$, $\omega_\lambda=\prod_{i=1}^s\omega_{k_i}$, and $\omega_k$ equals the number of degree $k$ monomials in variables $z_j^i$, $j=1,\ldots,m/d$, $i=1,\ldots,m_0$ such that for each $i\in[1,m_0]$, at least one of $z_j^i$ has degree 0. Since $\omega_k$ equals the coefficient of $t^k$ in $\frac{(1-t^{m/m_0})^m_0}{(1-t)^m}$, the same argument as in the proof of Lemma~\ref{lem-nb1} proves (i).

Similarly for $\lambda=(\lambda_1)^{k_1}\cdots(\lambda_s)^{k_s}$, let $\omega_\lambda^\Delta=\prod_{i=1}^s\omega_{k_i}^{\Delta}$, $\Delta=A,C,D$. where $\omega_k^A$ equals the coefficient of $t^k$ in $
\frac{1-t^{2l+1}}{(1-t)^{l+1}}$, $\omega_{2k+1}^C$ (resp. $\omega_{2k}^D$) equals the coefficient of $t^{2k+1}$ (resp. $t^{2k}$) in $
\frac{1-t^{2l}}{(1-t)^{l+1}}$, $\omega_{2k}^C$ (resp. $\omega_{2k+1}^D$) equals the coefficient of $t^{2k}$ (resp. $t^{2k+1}$) in $
\frac{1-t^{2l}}{(1-t)^{l}}$. We have $|{}^0\Sigma_{m_0,2n}^A|=\sum_{\mu\in\cP(n)}\omega_\mu^A$ and $|{}^0\Sigma_{m,2n}^\Delta|=\sum_{\mu\in\cP(n)}\omega_\mu^\Delta$, $\Delta=C,D$. The same argument as in the proof of Lemma~\ref{lem-nb1} proves (ii).
\end{proof}

\subsection{Dual strata}We write $\underline{\cN_{\pm 1}}$ for the set of $K$-orbits in $\cN_{\pm 1}$. 
Let $\cO\in\underline{\cN_{-1}}$. We define the dual stratum $\widecheck\cO\subset\Lg_1$ entirely similarly as in~\cite[\S3.1]{VX4}. Then the supports of the character sheaves are of the form $\overline{\widecheck\cO}$ for those $\cO$ lie in a subset of $\underline{\cN_{-1}}$, which we will describe later.

Let $\cO_\lambda\subset\cN_{-1}$ be a distinguished nilpotent orbit. Then $\widecheck\cO_\lambda$ is a distinguished nilpotent orbit in $\cN_1$. We define $\widecheck\lambda$ to be the Young diagram such that 
\beq\label{checkmu}
\widecheck\cO_\lambda\cong\cO_{\widecheck\lambda}.
\eeq

Suppose that we are in type AI. Let $\lambda$ be as in~\eqref{yda}. For each $i\in[1,s]$, we define $l_i=\min\{p_i^j,j\in[1,m]\}$.  Suppose that $l_i>0$ for some $i$. We define
$$\mu_\lambda=(\lambda_1)^{p^1_1-l_1}_1\cdots(\lambda_1)^{p_1^m-l_1}_m\cdots(\lambda_s)^{p_s^1-l_s}_1\cdots (\lambda_s)^{p_s^m-l_s}_m.$$   
Let $x\in\widecheck\cO_{\lambda}$. Then we have
\beq\label{eqn-comp}
A_K(x)\cong\bZ/\check d_\lambda\bZ,\ \check d_{\lambda}=\begin{cases}\on{gcd}(m\lambda_i,l_i>0,\ d_{\widecheck\mu_\lambda})&\text{ if $\mu_\lambda\neq\emptyset$}
\\md_\lambda&\text{ if $\mu_\lambda=\emptyset$}.\end{cases} 
\eeq

\section{Character sheaves: inner automorphisms of $SL_N$}\label{sec-cs-sl}

In this section we describe the character sheaves in the case of inner automorphisms of $G=SL_N$, that is, type AI in Section~\ref{ssec-gla}. We follow the approach in~\cite{VX3,VX4}. We will also make use of the notations there.

\subsection{Central characters}Let $Z(G)$ denote the center of $G$. We have that $Z(G)^\theta\cong\bZ/N\bZ$. Recall that we have
\beqn
\cA_K(\cN_{\pm1})=\bigsqcup_{\kappa:Z(G)^\theta\to\bC^*}\cA_K(\cN_{\pm1})_\kappa,\ \ \on{Char}_K(\Lg_1)=\bigsqcup_{\kappa:Z(G)^\theta\to\bC^*}\on{Char}_K(\Lg_1)_\kappa
\eeqn
where the subscript $\kappa$ indicates the action of $Z(G)^\theta$. For a positive integer $k$, we define $\cA_K(\cN_{\pm1})_k$ and $\on{Char}_K(\Lg_1)_k$ entirely similarly as in~\cite[\S2.2]{VX3}, that is, the sheaves where $Z(G)$ acts via an order $k$ character $\psi\in(\widehat{\bZ/N\bZ})_k$. 

As mentioned in the introduction, the set $\on{Char}_K(\Lg_1)_1$ has been determined in~\cite{L}.

\subsection{Supports of character sheaves}

Let $a\in\bZ_+$ and $d:=\on{gcd}(a,m)$. Recall the set ${}^0_a\Sigma_{m,\bfd}$ defined in Section~\ref{ssec-dist}. We define the set $\underline{\cN_{-1}}^{cs,a}$ to be the subset of $\underline{\cN_{-1}}$ consisting of the following nilpotent orbits
\bern
&\cO_{a,\mu}:=\cO_{(\frac{a}{d})^{l}_1\cdots(\frac{a}{d})^{l}_m\sqcup\widecheck\mu},\,0\leq l\leq \frac{Nd}{ma},\, \mu\in{}^{0}_{a}\Sigma_{m,\bfd-\frac{al}{d}\mathbf{1}_m}&\text{ if $d<m$}\\
&\cO_{a,\emptyset}:=\cO_{(\frac{a}{m})^{l}_1\cdots(\frac{a}{m})^{l}_m},\,l=\frac{N}{a},&\text{ if $d=m$},
\eern
where $\widecheck\mu$ is defined in~\eqref{checkmu}. 
Making use of~\eqref{eqn-comp} and entirely similar argument as in~\cite{VX4}, we conclude that 
\bern
&&\pi_1^K(\widecheck\cO_{a,\mu})\cong\bZ/\check d_{a,\mu}\bZ\times B_{G_{m,1,l}},\ l=\frac{d(N-|\mu|)}{ma},\,\ \check d_{a,\mu}=\begin{cases}\on{gcd}(ma/d,d_{\mu})&\text{ if $\mu\neq\emptyset$}\\ma/d&\text{ if $\mu=\emptyset$}\end{cases}, 
\eern
where by convention $B_{G_{m,1,0}}=\{1\}$. Note that $a|\check d_{a,\mu}$.

\subsection{Nilpotent support character sheaves}To describe the character sheaves, we begin by giving a conjectural description of the set $\on{Char}^\rn_K(\Lg_1)$ of nilpotent support character sheaves. Given an orbit $\cO_\lambda\subset\cN_{\pm1}$ and $\psi\in\widehat{\bZ/d_\lambda\bZ}$, we write $\cE_{\psi}$ for the irreducible $K$-equivariant local system on $\cO_\lambda$ given by $\psi$. Let $\on{Char}^\rn_K(\Lg_1)_a=\on{Char}^\rn_K(\Lg_1)\cap\on{Char}_K(\Lg_1)_a$.
 
\begin{conjecture}\label{conj-nilp}We have
\ber
\label{biorbital}&& \on{Char}^\rn_K(\Lg_1)_a=\{\on{IC}(\cO_{\mu},\cE_{\psi_a})\mid\mu\in{}^0_a\Sigma_{m,\bfd},\,\psi_a\in(\widehat{\bZ/d_\mu\bZ})_a\}.
\eer
\end{conjecture}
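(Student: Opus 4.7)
The plan is to establish the conjecture by generalizing Lusztig's result for trivial central character (the case $a=1$, treated in~\cite{L}) to arbitrary order $a$, splitting the proof into two inclusions. For $\supseteq$, given $\mu \in {}^0_a\Sigma_{m,\bfd}$ and $\psi_a \in (\widehat{\bZ/d_\mu\bZ})_a$, I would produce a nilpotent orbital complex $\cF \in \cA_K(\cN_{-1})_a$ with $\fF(\cF) \cong \on{IC}(\cO_\mu, \cE_{\psi_a})$. The natural candidate is the IC extension of the dual distinguished orbit $\widecheck\cO_\mu \subset \cN_{-1}$ equipped with the local system dictated by the component group computation~\eqref{eqn-comp}; compatibility with the $Z(G)^\theta$-action forces the central character order to match, which is consistent exactly when $a \mid d_\mu$.

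For the reverse inclusion, I would argue by downward induction on orbit closures in $\cN_{-1}$. Suppose $\cF = \on{IC}(\cO_\lambda, \cE) \in \cA_K(\cN_{-1})_a$ has nilpotent-supported Fourier transform. If $\cO_\lambda$ is not distinguished, then $\lambda$ fails the aperiodicity condition~\eqref{eqn-biorb} at some $a' < a$, and $\cF$ appears in the parabolic induction from some proper $\theta$-stable Levi $L \subsetneq G$; since parabolic induction commutes with Fourier transform and the induced sheaf on $\Lg_1$ acquires full-support components from $\Ll_1$, we would contradict the nilpotent-support hypothesis. If $\cO_\lambda$ is distinguished, the aperiodicity condition in its refined form~\eqref{eqn-biorb} for the given $a$ is forced by the requirement that $\cE$ be non-cuspidal in the appropriate generalized sense relative to $\theta$-stable parabolics, and the order-$a$ constraint on $\psi_a$ is immediate from the central character hypothesis.

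A cardinality check provides both a sanity verification and a potential shortcut: summing the formula in Lemma~\ref{lem-nb2}(i) over characters $\psi_a \in (\widehat{\bZ/d_\mu\bZ})_a$ of fixed order $a$ gives a generating function for the RHS which ought to match a generating function for the LHS obtained from a suitable Lusztig-type construction. The main obstacle is the $\subseteq$ direction, specifically the step showing that, on a distinguished orbit, the local system $\cE$ must factor through a character of order exactly $a$ rather than a larger cyclic quotient of $A_K(x)$. Lusztig's argument in~\cite{L} for $a=1$ proceeds via delicate quiver-theoretic analysis of aperiodicity; adapting it here requires tracking the $Z(G)^\theta$-action through the nearby cycle construction of~\cite{VX3} and through the parabolic induction compatibility of Fourier transform, which I expect to be the key technical hurdle.
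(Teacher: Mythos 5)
This statement is labeled as a \emph{conjecture} in the paper, and the paper offers no proof of it. What the paper does provide is: (a) a note that the conjecture is known for $m=2$ by~\cite{VX3} and for $a=1$ by~\cite{L} or~\cite{H}; and (b) partial evidence consisting of Lemma~\ref{lem-rn} (the sheaf $\on{IC}(\cO_{reg},\cE_{\psi_N})$ is a cuspidal character sheaf when $m\nmid N$, proved by a central-character constraint showing $\cO_{reg}$ is the only orbit whose dual stratum admits an order-$N$ character and that no proper Levi contributes) and its corollary, which establishes that $\on{IC}(\cO_{a^t_i},\cE_{\psi_a})$ lies in $\on{Char}^{\rn}_K(\Lg_1)_a$ via explicit parabolic induction from a Levi $S(GL_a^{\times t})$, together with a dimension/central-character argument pinning down the induced sheaf. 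Remark~\ref{rmk-nilp} records the further expectation that every member of the right-hand side of~\eqref{biorbital} arises by iterating this construction, which would give the $\supseteq$ inclusion for all of ${}^0_a\Sigma_{m,\bfd}$; the paper does not pursue this. So you are attempting a full proof of something the author has deliberately left open.

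Beyond that framing issue, your sketch has concrete gaps. In the $\subseteq$ direction you conflate ``distinguished'' (which per Section~\ref{ssec-dist} means $\lambda\in{}^0_1\Sigma_{m,\bfd}$, i.e.\ no row type occurring with $m$ distinct beginning boxes all with positive multiplicity) with membership in ${}^0_a\Sigma_{m,\bfd}$ for the relevant $a$; the latter additionally requires $a\mid d_\lambda$, and you need to explain why failure of condition~\eqref{eqn-biorb} for the specific $d=\gcd(a,m)$, rather than ``at some $a'<a$,'' is the correct obstruction. Your claim that inducing from a proper Levi necessarily introduces full-support components into $\Lg_1$ is also not correct as stated: parabolic induction of a nilpotent-supported sheaf on $\Ll_1$ remains nilpotent-supported on $\Lg_1$, so this cannot by itself contradict the nilpotent-support hypothesis. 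What one actually needs is a microlocal or Fourier-transform argument showing that a non-aperiodic orbit cannot support a biorbital complex with the prescribed central character --- this is exactly the hard step, already nontrivial for $a=1$ in~\cite{L} and~\cite{H}, and you explicitly defer it as ``the key technical hurdle.'' In the $\supseteq$ direction your candidate $\widecheck\cO_\mu$ is not obviously the right source: the paper's own evidence instead produces these sheaves by induction from small Levis, and Fourier transform need not relate $\on{IC}(\widecheck\cO_\mu,-)$ to $\on{IC}(\cO_\mu,-)$ in the way you suggest without further input.

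In short: what you have written is a plausible research plan, not a proof, and it is not the route the paper takes for its partial evidence. If you want to contribute meaningfully here, the most tractable step matching the paper's strategy would be to carry out the parabolic-induction program of Remark~\ref{rmk-nilp} carefully (i.e.\ prove $\supseteq$), since the harder $\subseteq$ direction is open even to the author.
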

\begin{remark}
The above conjecture holds when $m=2$ by~\cite{VX3}. Moreover,~\eqref{biorbital} holds for $a=1$ by~\cite{L} or by~\cite{H}. As pointed out in~\cite{L},  $\on{Char}^\rn_K(\Lg_1)_1$ is exactly the set of perverse sheaves that give rise to canonical bases for an affine Lie algebra of type A.
\end{remark}
We give some evidence of the above conjecture. 

Suppose that $m\nmid N$ and $\Lg_1$ contains a regular nilpotent element of $\Lg$. Let $\cO_{reg}\subset\cN_1$ be the unique $K$-orbit containing a regular nilpotent and let $\psi_N\in(\widehat{\bZ/N\bZ})_N$. 

\begin{lemma}\label{lem-rn} Suppose that $m\nmid N$. Then $\on{IC}(\cO_{reg},\cE_{\psi_N})$ is a cuspidal character sheaf.
\end{lemma}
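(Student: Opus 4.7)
The plan is to establish two things: that $A := \IC(\cO_{reg}, \cE_{\psi_N})$ is a character sheaf, and that $A$ is cuspidal. Both arguments rest on two inputs: by~\eqref{comp-1} applied to the single-block partition $\lambda = (N)$, we have $A_K(x) = \bZ/N\bZ$ for $x \in \cO_{reg}$; and the hypothesis $m \nmid N$, together with the standing assumption that $\Lg_1$ contains a regular nilpotent of $\Lg$, ensures that $(N) \in {}^0_N\Sigma_{m,\bfd}$ in the sense of~\eqref{eqn-biorb}, so $A$ fits the conjectural description~\eqref{biorbital}.

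For the first assertion, I would construct $A$ directly using the generalized nearby cycle approach of~\cite{VX3}. Choose a one-parameter family $x_t \in \Lg_1$, $t \in \bA^1$, with $x_t$ regular semisimple for $t \neq 0$ and $x_0 \in \cO_{reg}$, together with a $K$-equivariant rank-one local system on the regular semisimple stratum $\Lg_1^{rs}$ whose monodromy corresponds to $\psi_N \in \widehat{Z(G)^\theta}$. The nearby cycles of this local system form a $K$-equivariant perverse sheaf on $\Lg_1$ supported in $\overline{\cO_{reg}}$, whose $\psi_N$-isotypic component under the $Z(G)^\theta$-action is identified with $A$ by matching monodromy on the open orbit through $A_K(x) = \bZ/N\bZ$. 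Since nearby cycles intertwine with Fourier transform up to a predictable twist in the graded setting, this simultaneously exhibits $A$ as $\fF(\mathcal{G})$ for a simple perverse sheaf $\mathcal{G} \in \cA_K(\cN_{-1})$, placing $A$ in $\on{Char}_K(\Lg_1)$.

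For cuspidality, fix a proper $\theta$-stable Levi $L = S(\prod_{i=1}^k GL_{n_i}) \subsetneq SL_N$ inside a $\theta$-stable parabolic; I will show that the parabolic restriction of $A$ to $\Ll_1$ vanishes. The restriction has support in $\overline{\cO_{reg}^L}$, the closure of the regular nilpotent orbit of $L$ in $\Ll_1$. On the open orbit $\cO_{reg}^L$, a direct cocharacter-lattice computation on the center $Z(L) = \{(\lambda_i I_{n_i}) : \prod_i \lambda_i^{n_i} = 1\}$ gives $A_L(x_L) = \bZ/d_L\bZ$ with $d_L := \gcd(n_1, \ldots, n_k)$, and an analogous argument in the graded setting yields the same or a quotient for $A_{K_L}(x_L)$. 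Because $\sum_i n_i = N$ with $k \geq 2$ summands, $d_L$ is a proper divisor of $N$. The natural map $A_K(x) = \bZ/N\bZ \to A_{K_L}(x_L)$ induced by the inclusion $Z(SL_N) \hookrightarrow Z(L)$ therefore has image of order strictly less than $N$, so the faithful character $\psi_N$ does not factor through, and the local system contribution vanishes on the open orbit. Iterating this analysis on smaller Levis inside $L$ (equivalently, using compatibility of parabolic restriction with further restriction to smaller Levis) then forces the parabolic restriction of $A$ to vanish identically on $\Ll_1$.

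The main obstacle is the first part: rigorously identifying the nearby cycle output with $A$ itself, as opposed to a perverse sheaf involving additional summands on smaller-dimensional strata, and verifying that the monodromy of the rank-one local system on $\Lg_1^{rs}$ yields precisely $\psi_N \in \widehat{Z(G)^\theta}$ of order $N$. This monodromy analysis crucially uses $m \nmid N$, since otherwise the relevant $K$-action on $\Lg_1^{rs}$ would not realize the full order-$N$ central character group.
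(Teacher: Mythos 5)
Your proposal takes a genuinely different and considerably heavier route than the paper, and the half you flag as an obstacle is precisely the half the paper avoids doing at all.

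The paper's proof is a pure central-character counting argument. Because the Fourier transform preserves the $Z(G)^\theta$-action, $\on{Char}_K(\Lg_1)_N=\fF(\cA_K(\cN_{-1})_N)$ has exactly $\phi(N)$ elements, namely the Fourier transforms of the $\on{IC}(\cO_{reg},\cE_{\psi_N})$ with $\cO_{reg}\subset\cN_{-1}$ (the condition $N\mid d_\lambda$ forces $\lambda=(N)$). On the target side, every character sheaf is supported on the closure of some dual stratum $\widecheck\cO_\lambda$, and a sheaf affording an order-$N$ central character needs $N\mid\check d_\lambda$ in~\eqref{eqn-comp}. Since $m\nmid N$, the case $\check d_\lambda=md_\lambda$ is excluded and the $\gcd$ expression forces $\lambda=(N)$, i.e.\ the only eligible support is $\cO_{reg}$. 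So the $\phi(N)$ sheaves $\on{IC}(\cO_{reg},\cE_{\psi_N})$ are forced to be exactly $\on{Char}_K(\Lg_1)_N$ — no construction is required, only the observation that there is nowhere else for $\fF$ to land. Cuspidality is then the one-line observation that $\on{Char}_{L^\theta}(\Ll_1)_N=\emptyset$ for every proper $\theta$-stable Levi in a $\theta$-stable parabolic, so no induced sheaf can carry an order-$N$ central character.

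Your constructive approach to the first half has a real gap beyond the one you acknowledge. When $m\nmid N$ the pair $(K,\Lg_1)$ is never GIT-stable, so $\Lg_1^{rs}$ is \emph{not} dense in $\Lg_1$ (it may even be empty when $r=\min d_i=0$), and the regular nilpotent orbit $\cO_{reg}$ is in general not contained in $\overline{\Lg_1^{rs}}$. A one-parameter family $x_t$ with $x_t\in\Lg_1^{rs}$ for $t\neq 0$ therefore cannot specialize to $x_0\in\cO_{reg}$. The generalized nearby cycle construction of~\cite{VX3} that you cite does not degenerate from the regular semisimple locus; it works along a dual stratum $\widecheck\cO$ whose closure contains $\cO_{reg}$, and even then one must identify which summands of the output carry which central character. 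None of this is needed here: the central-character constraint already pins down the answer uniquely. Your cuspidality argument is closer in spirit to the paper's, but phrased via parabolic restriction and explicit centralizer component groups in $L$; the cleaner statement is simply that $\cA_{L^\theta}(\cN_{-1}\cap\Ll)_N=\emptyset$ because $\gcd$ of Jordan block sizes inside $S(\prod GL_{n_i})$ divides $\gcd(n_i)<N$, which immediately gives $\on{Char}_{L^\theta}(\Ll_1)_N=\emptyset$ and hence cuspidality by compatibility of induction with central characters, without tracking maps between component groups.
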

\begin{proof} Since $m\nmid N$, in view of~\eqref{eqn-comp}, the only $\widecheck\cO_\lambda$'s that afford a central character of order $N$ is $\cO_{reg}$. Moreover, $\on{Char}_{L^\theta}(\Ll_1)_N=\emptyset $ for any $\theta$-stable Levi subgroup $L$ contained in a $\theta$-stable parabolic subgroup. The lemma follows from central character considerations.
\end{proof}
\begin{corollary} Suppose that $\on{gcd}(a,m)=d<m$. Then $\on{IC}(\cO_{a^t_i},\cE_{\psi_a})$, $\psi_a\in(\widehat{\bZ/a\bZ})_a$, is a nilpotent support character sheaf. Moreover, it is cuspidal if and only if $t=1$.
\end{corollary}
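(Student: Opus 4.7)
The argument splits into an existence part and a cuspidality analysis. I would first verify directly that the Young diagram $(a)^t_i$ (with $t$ rows of length $a$ all starting at position $i$, plus the appropriate $(a/d)^l$-filler if one interprets $\cO_{a^t_i}$ as a shorthand for $\cO_{a,(a)^t_i}$) lies in ${}^0_a\Sigma_{m,\bfd}$. Indeed, $d_\lambda=a$ so $a\mid d_\lambda$, and because $d=\gcd(a,m)<m$ we have $m/d\geq 2$, so in each product $\prod_{k=0}^{m/d-1} p^{i_0+kd}_1$ over $i_0\in[1,d]$ at most one factor is nonzero, forcing the whole product to vanish. In particular the component group carries the character $\psi_a\in(\widehat{\bZ/a\bZ})_a$ and $\cE_{\psi_a}$ is well defined on $\widecheck\cO_{a^t_i}$ by~\eqref{eqn-comp}.

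For the existence as a character sheaf, I would construct $\on{IC}(\cO_{a^t_i},\cE_{\psi_a})$ by parabolic induction. The idea is to pick a $\theta$-stable Levi $L\subset G$ of the form (up to the determinant condition) $GL_{a}^{\,t}\times GL_{N-ta}$ by choosing a compatible $\gamma$-stable block decomposition of $V$, so that the grading restricted to each $GL_a$-factor is exactly the inner-automorphism grading for $SL_a$ with $m\nmid a$ (which holds since $d<m$). By Lemma~\ref{lem-rn} each $GL_a$-factor carries a cuspidal character sheaf $\on{IC}(\cO^{GL_a}_{reg},\cE_{\psi_a})$. The external product of these with a nilpotent-support sheaf on the remaining factor is a character sheaf on $(L^\theta)^0$, and its parabolic induction is a character sheaf on $K$ of which $\on{IC}(\cO_{a^t_i},\cE_{\psi_a})$ appears as a summand (by standard transitivity of induction together with support/equivariant-local-system matching, as in \cite{VX3,VX4}). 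This simultaneously proves the first assertion.

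For cuspidality, the $t>1$ direction is essentially free from the construction above: with $t\geq 2$ the Levi is a \emph{proper} $\theta$-stable Levi of $G$, so the sheaf is a summand of parabolic induction of a character sheaf in $\on{Char}_{(L^\theta)^0}(\Ll_1)$, hence not cuspidal. For the $t=1$ direction I would mimic the proof of Lemma~\ref{lem-rn}: by~\eqref{eqn-comp}, among distinguished orbits with a local system of central-character order $a$ inside any proper $\theta$-stable Levi $L\subsetneq G$ contained in a $\theta$-stable parabolic, none can induce to a sheaf containing $\on{IC}(\cO_{a^1_i},\cE_{\psi_a})$ as a summand with the correct central character. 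Concretely, central-character considerations force any such $L$ to already contain a $GL_a$-block in the same position $i$; but then either $L=G$ (contradiction) or the induction has support strictly containing $\cO_{a^1_i}$ with character of order $a$ concentrated elsewhere.

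\textbf{Main obstacle.} The hard step is the cuspidality for $t=1$: one must enumerate all proper $\theta$-stable Levis $L$ with $\on{Char}_{(L^\theta)^0}(\Ll_1)_a\neq\emptyset$ and rule each one out, which comes down to a combinatorial matching of Young $(m,\pm)$-diagram data under the induction functor, using the parametrization of Section~\ref{sec-nilp} together with the component-group calculation~\eqref{eqn-comp}. The existence part is comparatively routine once the nearby-cycle/parabolic-induction machinery of~\cite{VX3} is in hand.
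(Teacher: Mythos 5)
The overall architecture you propose (parabolic induction from a Levi built out of $GL_a$-blocks, with cuspidality controlled by whether that Levi is proper) is the same as the paper's. But two things need correction.

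First, the combinatorics of the diagram $a^t_i$ forces $|\bfd| = N = at$: a Young $(m,\pm)$-diagram with $t$ rows of length $a$ has exactly $at$ boxes. So the Levi in the paper is $L \cong S(GL_a^{\times t})$ with no residual $GL_{N-ta}$-factor; your ``remaining factor'' is trivial. For $t=1$ this means $L=G=SL_a$, with $m\nmid a$ since $d<m$, and $\cO_{a^1_i}$ is precisely $\cO_{reg}$, so the cuspidality assertion for $t=1$ is literally Lemma~\ref{lem-rn}, not something that needs to be ``mimicked.'' Your speculative enumeration of proper Levis in that case is reproving the lemma from scratch.

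Second, and more seriously, your existence argument asserts that $\on{IC}(\cO_{a^t_i},\cE_{\psi_a})$ ``appears as a summand by standard transitivity of induction together with support/local-system matching.'' This is where the real work lies, and it is not standard bookkeeping. The paper proves a precise uniqueness statement: $\cO_{a^t_{a_0}}$ is the \emph{only} nilpotent orbit $\cO$ such that $\cO\cap(\cO_{\Ll_1}\oplus(\Ln_P)_1)\neq\emptyset$ and $\widehat{\pi_1^{K}(\cO)}_a\neq\emptyset$. Establishing this requires writing any candidate $\lambda$ as $(a\mu_1)^{m_1}\cdots(a\mu_s)^{m_s}$, reducing the $a\mu_i$ modulo $m$, and then running a two-stage inequality argument on $d_1(\lambda)$ and $d_{a_0}(\lambda)$ to force $a_i>0$, $\mu_i=k_i+1$ and finally $\mu_i=1$ for all $i$. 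Without this combinatorial pinning-down, the central-character argument does not let you conclude that the induced sheaf actually contains the desired $\on{IC}$ as a summand --- it only tells you which orbits are \emph{allowed} to carry a summand of that central character, not which ones do. You have correctly identified the role of $\eqref{eqn-comp}$ and the central-character filtration, but you have skipped the dimension count that the proof hinges on, and you have mislocated the difficulty: the $t=1$ cuspidality is free from Lemma~\ref{lem-rn}, whereas the uniqueness of the supporting orbit is the step that needs an actual argument.
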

\begin{proof} Suppose that $a=k_0m+a_0$, where $a_0\in(0,m)$. We can assume that $i=a_0$. We have $d_i=(k_0+1)t$, $i\in[1,a_0]$, $d_i=k_0t$, $i\in[a_0+1,m]$.  Let $L$ be a $\theta$-stable Levi subgroup contained in a $\theta$-stable parabolic subgroup such that $L\cong S(GL_a^{\times t})$ and $\cO_{\Ll_1}:=\cO_{a_{a_0}}^{\boxtimes t}\subset \Ll_1$. Such an $L$ exists. We have $\pi_1^{L^\theta}(\cO_{\Ll_1})\cong\bZ/a\bZ$. Let $\psi_a\in(\widehat{\bZ/a\bZ})_a$. Then $\on{IC}(\cO_{\Ll_1},\cE_{\psi_a})\in\on{Char}_{L^\theta}(\Ll_1)$ by Lemma~\ref{lem-rn}. We show that the only nilpotent orbit  $\cO$ such that $\cO\cap(\cO_{\Ll_1}\oplus(\Ln_P)_1)\neq\emptyset$ and such that  $\widehat{\pi_1^{K}(\cO)}_a\neq\emptyset$ is $\cO_{a^t_{a_0}}$. It then follows that 
\beqn
\on{Ind}_{\Ll_1\subset\Lp_1}^{\Lg_1}\on{IC}(\cO_{\Ll_1},\cE_{\psi_a})=\on{IC}(\cO_{a^t_i},\cE_{\psi_a})\oplus\cdots
\eeqn
(up to shift) and the corollary follows.

Let $\cO_\lambda$ be an orbit such that $\widehat{\pi_1^{K}(\cO)}_a\neq\emptyset$. Then $\lambda=(a\mu_1)^{m_1}\cdots(a\mu_s)^{m_s}$. We have $a\mu_i=k_0\mu_im+a_0\mu_i$. Suppose that $a_0\mu_i=k_im+a_i$, $a_i\in[0,m)$.  Then we have
\bern
&&(k_0+1)t=d_1(\lambda)\leq\sum_{i\in[1,s],a_i\neq 0}(k_0\mu_i+k_i+1)m_i+\sum_{i\in[1,s],a_i= 0}(k_0\mu_i+k_i)m_i\\
&&\Rightarrow\sum_{i\in[1,s],a_i\neq 0}\mu_im_i+\sum_{i\in[1,s],a_i= 0}\mu_im_i=t\leq \sum_{i\in[1,s],a_i\neq 0}(k_i+1)m_i+\sum_{i\in[1,s],a_i= 0}k_im_i.
\eern
Note that  $\mu_i\geq k_i+1$. Thus we get $a_i>0$ and $\mu_i=k_i+1$ for all $i$. Suppose that $\mu_i\geq 2$ (or equivalently $k_i\geq 1$) for some $i$, then $a_i<a_0$. It follows that
\bern
&&k_0t+t=d_{a_0}(\lambda)\leq\sum_{i\in[1,s],\mu_i\geq 2}(k_0\mu_i+\mu_i-1)m_i+\sum_{i\in[1,s],\mu_i=1}(k_0\mu_i+\mu_i)m_i\\
&&=k_0t+\sum_{i\in[1,s],\mu_i\geq 2}(\mu_i-1)m_i+\sum_{i\in[1,s],\mu_i=1}\mu_im_i.
\eern
This holds only if $\mu_i=1$ for all $i$. We then conclude that $\lambda=a^t_{a_0}$.
\end{proof}
\begin{remark}\label{rmk-nilp}Suppose that 
$
\mu=(a\mu_1)^{p_1^1}_1\cdots(a\mu_1)^{p_1^m}_m\cdots (a\mu_s)^{p_s^1}_1\cdots(a\mu_s)^{p_s^m}_m\in{}^0_a\Sigma.
$
We expect that $\on{IC}(\cO_\mu,\cE_{\psi_a})$ can be obtained by applying parabolic induction to character sheaves of the form $\on{IC}({\boxtimes\cO_{a_i^{p_i^j}}},\cE_{\psi_a})$ on $\theta$-stable Levi subgroups of the form $S(\prod_{i,j}{GL_{ap_i^j}}^{\times \mu_i})$. It will then follow that the sheaves in $\on{Char}^\rn_K(\Lg_1)_a$ can be obtained by parabolic induction from the character sheaf supported on a regular nilpotent orbit in $\on{Char}_{L^\theta}(\Ll_1)$ for a $\theta$-stable Levi subgroup of the form $S(GL_a^{\times N/a})$.
\end{remark}

\subsection{Character sheaves}Let $\tau\in\cP_d(l)$. Recall the irreducible representation $L_\tau$ of $B_{G_{m,1,l}}$ (see Section~\ref{Hecke}). Let $\cT_{\psi_a,\tau}$ denote the $K$-equivariant local system on $\widecheck\cO_{a,\mu}$  corresponding to the representation $\psi_a\boxtimes L_\tau$ of $\pi_1^K(\widecheck\cO_{a,\mu})$ for $\psi_a\in(\widehat{\bZ/\check d_{a,\mu}\bZ})_a$. 
Assume that Conjecture~\ref{conj-nilp} holds. We have the following  explicit description of the character sheaves.
\begin{theorem}\label{cs-sl}
{\rm (i)} Suppose that $m|a$. We have 
\beqn
\on{Char}_K(\Lg_1)_a=\{\on{IC}(\widecheck\cO_{a,\emptyset},\cT_{\psi_{a},\tau})\mid \psi_a\in(\widehat{\bZ/a\bZ})_a,\,\tau\in\cP_m(N/a)\}.
\eeqn
{\rm (ii)} Suppose that $\on{gcd}(a,m)=d<m$ and Conjecture~\ref{conj-nilp} holds. We have
\bern
\on{Char}_K(\Lg_1)_a&=&\{\on{IC}(\widecheck\cO_{a,\mu},\cT_{\psi_{a},\tau})\mid\cO_{a,\mu}\in\underline{\cN_{-1}}^{cs,a},\,\psi_a\in(\widehat{\bZ/\check d_{a,\mu}\bZ})_a,\,\tau\in\cP_d(l)\}.
\eern
\end{theorem}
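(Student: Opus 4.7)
The plan is to adapt the framework developed in \cite{VX3, VX4}: character sheaves on $\Lg_1$ are obtained, via Fourier transform, from simple $K$-equivariant perverse sheaves on $\cN_{-1}$; these in turn are produced by parabolic induction of character sheaves supported on regular nilpotent orbits of $\theta$-stable Levi subalgebras $\Ll_1$, combined with a generalised nearby-cycle construction that describes the resulting full-support local systems on the dual strata $\widecheck\cO$. Assuming Conjecture~\ref{conj-nilp} and using Remark~\ref{rmk-nilp}, every sheaf in $\on{Char}^{\rn}_K(\Lg_1)_a$ arises by parabolic induction from the cuspidal regular-nilpotent character sheaf on a Levi of the form $L \cong S(GL_a^{\times N/a})$ (cf.\ Lemma~\ref{lem-rn}). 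This reduces the proof to analysing, for each $\cO_{a,\mu}\in\underline{\cN_{-1}}^{cs,a}$, the $\theta$-stable Levi subgroups whose parabolic induction lands on $\overline{\widecheck\cO_{a,\mu}}$, and identifying the local system factors that appear.

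For a fixed $\cO_{a,\mu}\in\underline{\cN_{-1}}^{cs,a}$ with $l=d(N-|\mu|)/(ma)$, I would construct a $\theta$-stable Levi $L_\mu$ whose semisimple part is a product of $GL$-blocks matching $\widecheck\mu$ together with a copy of $S(GL_a^{\times l})$. On the $S(GL_a^{\times l})$-factor I would place the regular-nilpotent cuspidal of order-$a$ central character provided by Lemma~\ref{lem-rn}; on the remaining $\widecheck\mu$-block I would place the nilpotent orbital complex whose Fourier transform is $\on{IC}(\widecheck\cO_{\widecheck\mu},\cE_{\psi_a})$, existing by Conjecture~\ref{conj-nilp}. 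The parabolic induction of this sheaf has, by the formalism of \cite{VX4}, support equal to the closure of a single dual stratum $\widecheck\cO_{a,\mu}$ (with case (i) being the sub-case $\mu=\emptyset$, $d=m$, so that $L_\mu$ is simply $S(GL_a^{\times N/a})$ and all inductions land on the unique dual stratum $\widecheck\cO_{a,\emptyset}$). The arising local system on $\widecheck\cO_{a,\mu}$ is governed by the fundamental group
\begin{equation*}
\pi_1^K(\widecheck\cO_{a,\mu}) \cong \bZ/\check d_{a,\mu}\bZ \times B_{G_{m,1,l}}.
\end{equation*}
Since $a\mid\check d_{a,\mu}$, the first factor accounts for the character $\psi_a\in(\widehat{\bZ/\check d_{a,\mu}\bZ})_a$, while the braid group factor acts through the Hecke algebra quotient $\cH^a_{G_{m,1,l}}$ of Section~\ref{Hecke}, whose irreducibles $L_\tau$ are indexed by $\tau\in\cP_d(l)$ (respectively $\cP_m(N/a)$ in case (i), since there $d=m$).

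Finally, I would verify exhaustiveness and non-redundancy. Exhaustiveness follows from Conjecture~\ref{conj-nilp} applied to the Levi factors together with compatibility of parabolic induction and Fourier transform, which force every character sheaf with nontrivial central-character block $a$ to have support of the form $\overline{\widecheck\cO_{a,\mu}}$ for some $\cO_{a,\mu}\in\underline{\cN_{-1}}^{cs,a}$. Non-redundancy comes from matching the parameters $(\psi_a,\tau)$ to distinct simple constituents via the decomposition of induced representations of $\pi_1^K(\widecheck\cO_{a,\mu})$ and the semisimplicity of $\cH^a_{G_{m,1,l}}$. The main obstacle is the last point: pinning down \emph{precisely} the action of the braid group factor $B_{G_{m,1,l}}$ on the induced local system and showing it factors through $\cH^a_{G_{m,1,l}}$ with the correct parameter $\iota=a$. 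This requires a monodromy computation along the generalised nearby-cycle family, analogous to the core arguments of \cite{VX3}, where the order-$a$ central character of the cuspidal on $S(GL_a^{\times l})$ is exactly what forces the Hecke relation $(\sigma^a_{H_{\tau_i}}-1)^{m/a} = 0$.
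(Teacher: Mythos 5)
Your high-level framework is right, but there are two genuine gaps where the proposal departs from what would actually close the argument, and in both cases the paper's proof does something essentially different.

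First, the choice of Levi and the nature of the induced sheaf. You propose inducing from a Levi $L_\mu$ built as a product of $GL_a$-blocks (carrying the regular-nilpotent cuspidal of Lemma~\ref{lem-rn}) and blocks matching $\widecheck\mu$. Inducing from such a Levi produces, by the general formalism, a \emph{sum} over $\tau$ of the sheaves $\on{IC}(\widecheck\cO_{a,\mu},\cT_{\psi_a,\tau})$ (compare the paper's display~\eqref{eqn-ind2}, which is offered only as \emph{evidence} for Conjecture~\ref{conj-cusp}, not as a proof of Theorem~\ref{cs-sl}); it does not by itself show that each individual summand $\on{IC}(\widecheck\cO_{a,\mu},\cT_{\psi_a,\tau})$ lies in $\on{Char}_K(\Lg_1)_a$. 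You flag this — identifying the exact Hecke parameter and separating the $\tau$-constituents — as ``the main obstacle,'' but the theorem is precisely the statement that resolves that obstacle, so it cannot be deferred. The paper instead proves part (i) directly via the nearby-cycle machinery (the proposition giving $\fF P_{\widecheck\cO,\chi}\cong\on{IC}(\widecheck\cO,\cH^d_{G_{m,1,l}}\otimes\bC_\chi)$), whose output is already decomposed into irreducibles $L_\tau$, and then for part (ii) induces from the two-block Levi $L\cong S(GL_{lkm}\times GL_{N-lkm})$ putting the \emph{full-support} sheaf $\widecheck\cO_{k^l_1\cdots k^l_m}$ (with its $\tau$-local system already attached, by part (i)) on the first factor and the nilpotent orbital complex $\cO_\mu$ on the second. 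Because the first factor already carries the $\tau$-structure, the induction identifies the individual $\on{IC}(\widecheck\cO_{a,\mu},\cT_{\psi_a,\tau})$ rather than only their sum; this is exactly what your $S(GL_a^{\times l})$-factor fails to provide.

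Second, two smaller points. Your argument invokes Remark~\ref{rmk-nilp}, which is stated only as an expectation; the paper's proof is careful to use only Conjecture~\ref{conj-nilp} (applied on the $GL_{N-lkm}$ block), not Remark~\ref{rmk-nilp}. And for exhaustiveness the paper does not argue sheaf-theoretically from compatibility of induction with Fourier transform: it writes down an explicit combinatorial bijection $\on{IC}(\cO_\lambda,\cE_{\psi_a})\mapsto\on{IC}(\widecheck\cO_{a,\mu_\lambda},\cT_{\psi_a,\tau_\lambda})$ between $\cA_K(\cN_{-1})_a$ and the proposed parameter set, and then notes that $\fF$ is a bijection $\cA_K(\cN_{-1})\to\on{Char}_K(\Lg_1)$ by definition, so once inclusion is proved, equality follows by counting. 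This counting step is missing from your proposal and is what turns the one-way containment into the stated equality.
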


 To prove the theorem, we begin by defining a bijection between $\cA_K(\cN_{-1})_a$ and the set of sheaves in Theorem~\ref{cs-sl}, for each $a$. 

Let $\cO_\lambda$ be an orbit such that $a|d_\lambda$. Then we have
\beqn
\lambda=(a\mu_1)^{p_1^1}_1\cdots(a\mu_1)^{p_1^m}_m\cdots (a\mu_t)^{p_t^1}_1\cdots(a\mu_t)^{p_t^m}_m.
\eeqn
For $i\in[1,d]$, $k\in[1,t]$, let 
\bern
l_k^i=\on{min}\{p_k^{i+jd},\,j\in[0,m/d-1]\},  q_k^{jd+i}=p_k^{jd+i}-l_k^i,\,j\in[0,m/d-1].
\eern   Let 
\beqn
\begin{gathered}
\nu^i=(\mu_1)^{l_1^i}(\mu_2)^{l_2^i}\cdots(\mu_t)^{l_t^i},\,i\in[1,d];\ \ 
\mu_\lambda=(a\mu_1)^{q_1^1}_1\cdots(a\mu_1)^{q_1^m}_m\cdots (a\mu_t)^{q_t^1}_1\cdots(a\mu_t)^{q_t^m}_m. \end{gathered}
\eeqn
Then $\tau_\lambda:=(\nu^1,\ldots,\nu^d)\in\cP_d(l_\lambda)$, $l_\lambda=\sum_{i=1}^d|\nu^i|$  and $\mu_\lambda\in{}^0_a\Sigma_{m,N-mla/d}$. Note that $\mu_\lambda=\emptyset$ when $d=m$. The maps 
\beqn
\begin{gathered}
\on{IC}(\cO_\lambda,\cE_{\psi_a})\mapsto\on{IC}(\widecheck\cO_{a,\emptyset},\cT_{\psi_a,\tau_\lambda}),\,\psi_a\in(\widehat{\bZ/d_\lambda\bZ})_a\text{ when $d=m$}\\
\on{IC}(\cO_\lambda,\psi_a)\mapsto\on{IC}(\widecheck\cO_{a,\mu_\lambda},\cT_{\psi_a,\tau_\lambda})\text{ when $d<m$}
\end{gathered}
\eeqn
define  the desired bijections.

It remains to show that the sheaves in Theorem~\ref{cs-sl} are indeed character sheaves. This is done in the next two subsections.

\subsection{Nearby cycle sheaves}Suppose that $\bfd=N/m\mathbf{1}_{m}$. In this case the representation $(K,\Lg_1)$ is a stable polar representation, that is, we have $\overline{\Lg_1^{rs}}=\Lg_1$. We have
$
I=Z_K(\fa)/Z_K(\fa)^0\cong\bZ/m\bZ.$ We apply the nearby cycle sheaf construction in~\cite{GVX} and make use of notations there. 

Let $\chi\in\hat I_\iota$, $\iota|m$, and $P_\chi$ the corresponding nearby cycle sheaf.  Let $s\in W_\fa\cong G_{m,1,r}$ be a reflection of the form $s_{ij}^{(k)}$. Then we have
$
Z_G(\fa_s)_{\on{der}}\cong {SL_2^{\times m}}
$ (product of $m$ copies of $SL_2$)
and $\theta|_{Z_G(\fa_s)_{\on{der}}}$ permutes the $m$ factors of $SL_2$.
Let $t\in W_\fa$ be a reflection of the form $\tau_k$. Then we have
$
Z_G(\fa_t)_{\on{der}}\cong SL_m
$
and $\theta|_{Z_G(\fa_t)_{\on{der}}}$ is  (GIT) stable of rank $1$. 
 Apply~\cite{GVX} and the (GIT) stable rank 1 calculation in~\cite[\S6.1]{VX}, we conclude that
\beqn
\fF(P_\chi)\cong\on{IC}(\Lg_1^{rs},\cM_\chi),\ M_\chi\cong \bC_{\chi}\boxtimes\cH^{\iota}(G_{m,1,r})
\eeqn
where  we write $M_\chi$ for the representation of $\pi_1^K(\Lg_1^{rs})\cong\bZ/m\bZ\times B_{G_{m,1,r}}$ that gives rise to the $K$-equivariant local system $\cM_\chi$ on $\Lg_1^{rs}$. It follows that
\beqn
\{\on{IC}(\Lg_1^{rs},\cT_{\psi_\iota,\tau})\mid\psi_\iota\in\widehat{(\bZ/m\bZ)}_{\iota},\tau\in\cP_\iota(N/m)\}\subset\on{Char}_K(\Lg_1)_\iota.
\eeqn
 
Next, we apply the generalised nearby cycle construction in~\cite{VX3} to produce character sheaves supported on $\widecheck\cO:=\widecheck\cO_{k^l_1\cdots k^l_m}$.  Let $\chi\in\widehat{\bZ/km\bZ}$. The following proposition can be derived using similar argument as in the proof of~\cite[(6.1)]{VX3}.
\begin{proposition}We have
\bern
&&\fF P_{\widecheck\cO,\chi}\cong\on{IC}(\widecheck\cO,\cH_{G_{m,1,l}}^{d}\otimes \bC_\chi)\text{ if $\chi\in(\widehat{\bZ/km\bZ})_{kd}$, $d|m$ and $\on{gcd}(k,m/d)=1$}.
\eern
\end{proposition}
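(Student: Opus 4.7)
The plan is to mimic the argument of~\cite[(6.1)]{VX3}, replacing the specific rank-$1$ input used there by the combined inputs from the two types of reflections in $W_\fa\cong G_{m,1,r}$ that were already exploited in the ``regular'' case above.

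First, I would produce $P_{\widecheck\cO,\chi}$ as a generalised nearby cycle sheaf along a one-parameter family of Kostant sections: choose a $\theta$-stable Levi $L\subset G$ adapted to the orbit $\widecheck\cO=\widecheck\cO_{k^l_1\cdots k^l_m}$, so that $L^\theta$ acts on $\Ll_1$ as a polar, stable representation with little Weyl group $G_{m,1,l}$, and lift via the transversal slice construction. The central character input on the Levi is a character $\chi\in(\widehat{\bZ/km\bZ})_{kd}$; the hypothesis $d\mid m$ together with $\gcd(k,m/d)=1$ is exactly what is needed for $\chi$ to restrict on the relevant $\bZ/m\bZ$ to a character of order $d$, hence for the Hecke algebra $\cH^d_{G_{m,1,l}}$ to be the correct target.

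Next, I would apply the fact that Fourier transform intertwines the generalised nearby cycle construction with the formation of an IC sheaf on the dual stratum. As in~\cite{VX3}, this first identifies $\fF P_{\widecheck\cO,\chi}$ as $\on{IC}(\widecheck\cO,\calM)$ for some $K$-equivariant local system $\calM$ on $\widecheck\cO$, and it reduces the determination of $\calM$ to computing the monodromy of the family over a punctured transversal slice at a generic point of $\widecheck\cO$. That monodromy factors as a representation of $\pi_1^K(\widecheck\cO)\cong\bZ/km\bZ\times B_{G_{m,1,l}}$ on which the central factor $\bZ/km\bZ$ acts by $\chi$ by construction; what must be pinned down is the $B_{G_{m,1,l}}$-representation, and the goal is to show it factors through $\cH^d_{G_{m,1,l}}$ and equals its regular representation.

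Finally, the braid-group factor is computed one reflection at a time by the rank-$1$ slice calculations already invoked just before the statement: a reflection of type $s_{ij}^{(p)}$ contributes via $Z_G(\fa_s)_{\der}\cong SL_2^{\times m}$ with $\theta$ cyclically permuting the factors, giving the usual Hecke-type quadratic relation $\sigma_{H_s}^2=1$; a reflection of type $\tau_k$ contributes via $Z_G(\fa_t)_{\der}\cong SL_m$ with $\theta$ GIT-stable of rank $1$, and the corresponding rank-$1$ computation in~\cite[\S6.1]{VX} produces the cyclotomic relation $(\sigma_{H_{\tau}}^d-1)^{m/d}=0$ provided the order of $\chi$ on the rank-$1$ center matches $kd$, which is precisely the numerical condition in the hypothesis. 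Combining these relations shows that $\calM$ is the local system attached to $\cH^d_{G_{m,1,l}}\otimes\bC_\chi$, completing the identification. The main obstacle I expect is precisely the bookkeeping of central characters: one has to check that the restrictions of $\chi$ to the $\bZ/m\bZ$-subgroup appearing in each rank-$1$ centralizer have the correct orders simultaneously, and this is exactly where the conditions $d\mid m$ and $\gcd(k,m/d)=1$ are used.
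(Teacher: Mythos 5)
The paper offers essentially no proof of this proposition --- it asserts only that the statement ``can be derived using similar argument as in the proof of~\cite[(6.1)]{VX3}.'' Your sketch captures the intended shape of that argument: apply the generalised nearby cycle construction to produce $P_{\widecheck\cO,\chi}$, use that Fourier transform turns it into an $\on{IC}$-sheaf on the closure of the dual stratum, and pin down the resulting local system by rank-one slice computations, one reflection hyperplane of $G_{m,1,l}$ at a time. So the overall route is the right one, and it is what the paper is pointing to.

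There is, however, an internal inconsistency in the rank-one step for the $\tau$-type reflections. You write that such a reflection ``contributes via $Z_G(\fa_t)_{\der}\cong SL_m$'' and then, in the same sentence, ask for ``the order of $\chi$ on the rank-$1$ center'' to be $kd$. The group $SL_m$ has center of order $m$, not $km$; a character of order $kd$ with $k>1$ has no home there. The identification $Z_G(\fa_t)_{\der}\cong SL_m$ is lifted verbatim from the paper's treatment of the regular semisimple stratum $\Lg_1^{rs}$, which is the special case $k=1$. For general $k$ the stratum $\widecheck\cO=\widecheck\cO_{k^l_1\cdots k^l_m}$ is \emph{not} open in $\Lg_1$; its generic points have a nonzero nilpotent part, and the generalised nearby cycle slice at a $\tau$-hyperplane is a one-parameter family $x_0+\bC a_0$ inside an $SL_{km}$ with $\bfd=k\mathbf{1}_m$ (so that its equivariant fundamental group carries the $\bZ/km\bZ$ factor on which $\chi$ lives). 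You need the cyclotomic relation to come out of \emph{this} computation, not out of the $k=1$ calculation in~\cite[\S6.1]{VX}; the latter does not see $k$ at all, so it cannot produce a condition involving $kd$. This is the genuine gap.

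A smaller issue: your explanation of the hypothesis $\gcd(k,m/d)=1$ is off. If $\chi\in(\widehat{\bZ/km\bZ})_{kd}$ and $d\mid m$, then the restriction of $\chi$ to the canonical copy $\langle k\rangle\cong\bZ/m\bZ$ inside $\bZ/km\bZ$ automatically has order $d$ --- no coprimality is needed for that. The role of $\gcd(k,m/d)=1$ is rather to guarantee that $\gcd(kd,m)=d$, i.e.\ that the central character order $a=kd$ is consistent with the Hecke-algebra parameter $d=\gcd(a,m)$ appearing in Theorem~\ref{cs-sl}(ii); without it, the proposed formula for $\fF P_{\widecheck\cO,\chi}$ would not match the parametrisation of $\on{Char}_K(\Lg_1)_a$. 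You should rephrase the role of this hypothesis accordingly and, more importantly, replace the $SL_m$ rank-one input with the correct rank-one family in $SL_{km}$ that includes the nilpotent $x_0$.
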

Let $a\in\bZ_+$ and $\on{gcd}(m,a)=d$. We write $k=a/d$ and $l=N/mk$. It follows from the above proposition that
\beq\label{char-nby}
\left\{\on{IC}(\widecheck\cO_{a,\emptyset},\cT_{\psi_{a},\tau})\mid \,\psi_a\in(\widehat{\bZ/mk\bZ})_a,\,\tau\in\cP_d(l)\right\}\subset\on{Char}_K(\Lg_1)_a.
\eeq
Thus part (i) of Theorem~\ref{cs-sl} follows.

Now assume that $d<m$. We prove part (ii) of Theorem~\ref{cs-sl} by showing that the sheaves there can be obtained from parabolic induction. To that end,
let $\{e_i^j,j\in[1,d_i]\}$ be a basis of $M_i$. Let $P$ be the $\theta$-stable parabolic subgroup that stabilises the flag $0\subset V_1\subset V$, where $V_1=\on{span}\{e_i^j,\,i\in[1,m],\,j\in[1,lk]\}$. Let $L\cong S(GL_{lkm}\times GL_{N-lkm})$ be the natural $\theta$-stable Levi subgroup. Consider the stratum $\widecheck\cO_{\Ll_1}:=\widecheck\cO_{k^l_1\cdots k^l_m}\boxtimes\cO_{\mu}\subset\Ll_1$, where $\mu\in{}^0_a\Sigma$. We have $\pi_1^{L^\theta}(\Ll_1)\cong\bZ/\check d_{a,\mu}\bZ\times B_{G_{m,1,l}}$. Let $\psi_a\in(\widehat{\bZ/\check d_{a,\mu}\bZ})_a$ and $\tau\in \cP_d(l)$. Let $\cL_{\psi_a,\tau}$ denote the $L^\theta$-equivariant local system on $\widecheck\cO_{\Ll_1}$ given by the irreducible representation $\psi_a\boxtimes L_\tau$ of $\pi_1^{L^\theta}(\Ll_1)$. Then $\on{IC}(\widecheck\cO_{\Ll_1},\cL_{\psi_a,\tau})\in\on{Char}_{L^\theta}(\Ll_1)_a$ by~\eqref{char-nby} and the assumption that Conjecture~\ref{conj-nilp} holds. Consider the map $\pi:K\times^{P_K}(\widecheck\cO_{\Ll_1}+(\Ln_P)_1)\to\Lg_1$. One checks that $\dim K/P_K=\on{dim}(\Ln_P)_1=Nkl-mk^2l^2$, $\on{dim}\widecheck\cO_{\Ll_1}=l(mk^2l-k+1)+\sum_{i=1}^md_i^2-2Nkl+mk^2l^2-c_\mu$ and $\on{dim}\widecheck\cO_{a,\mu}=\sum_{i=1}^md_i^2-c_\mu-lk+l$, where $c_\mu$ denotes the dimension of centraliser of $x_\mu$ in ${G'}^\theta$ for $\theta|_{G'=GL_{N-lkm}}$. Moreover, the fiber $\pi^{-1}(x)$ consists of $1$ element, for $x\in\widecheck\cO_{a,\mu}$. It follows that the image of $\pi$ equals the closure of $\widecheck\cO_{a,\mu}$.  Entirely similar argument as in~\cite{VX4} proves that
\beq\label{eqn-ind1}
\on{Ind}_{\Ll_1\subset\fp_1}^{\Lg_1}\on{IC}(\widecheck\cO_{\Ll_1},\cL_{\psi_a,\tau})=\on{IC}(\widecheck\cO_{a,\mu},\cT_{\psi_{a},\tau})\oplus\cdots
\eeq
It follows that $\on{IC}(\widecheck\cO_{a,\mu},\cT_{\psi_{a},\tau})\in\on{Char}_K(\Lg_1)_a$. This completes the proof of Theorem~\ref{cs-sl}.

\subsection{Cuspidal character sheaves} In this subsection we give a conjectural description of the set of cuspidal character sheaves. 
\begin{conjecture}\label{conj-cusp}
{\rm (i)}Suppose that $m\nmid N$. Then $\on{Char}^{cusp}_K(\Lg_1)\neq \emptyset$ if and only if the grading is N-regular, that is, $\Lg_1$ contains a regular nilpotent element of $\Lg$. In the latter case, we have
\beqn
\on{Char}^{cusp}_K(\Lg_1)=\{\on{IC}(\cO_{reg},\cE_{\psi_N})\mid\cO_{reg}\subset\cN_1\cap\Lg^{reg},\,\psi_N\in\widehat{(\bZ/N\bZ)}_N\}.
\eeqn
{\rm (ii)}Suppose that $m|N$. Then $\on{Char}^{cusp}_K(\Lg_1)\neq \emptyset$ only if $\bfd=N/m\mathbf{1}_m$. In the latter case we have
\bern
&&\on{Char}^{cusp}_K(\Lg_1)=\bigsqcup_{\substack{d|m\\\on{gcd}(N/m,m/d)=1}}\on{Char}^{cusp}_K(\Lg_1)_{\frac{dN}{m}}\\
&&\on{Char}^{cusp}_K(\Lg_1)_{\frac{dN}{m}}
=\{\on{IC}\left(\widecheck\cO_{(\frac{N}{m})_1\cdots(\frac{N}{m})_m},\cT_{\psi_{\frac{dN}{m}},\tau}\right)\mid\psi_{\frac{dN}{m}}\in(\widehat{\bZ/N\bZ})_{\frac{dN}{m}},\ \tau\in\cP_d(1),\ \,i\in[1,d]\}.
 \eern
\end{conjecture}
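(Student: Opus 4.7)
The approach is to leverage the explicit parabolic induction formula~\eqref{eqn-ind1} from the proof of Theorem~\ref{cs-sl} to eliminate most candidates, then verify the remaining candidates are genuinely cuspidal.

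\textbf{Part (i) when $m\nmid N$.} First I would note that Theorem~\ref{cs-sl}(i) is vacuous in this range (it requires $m\mid a$, which forces $m\mid N$), so every character sheaf falls under Theorem~\ref{cs-sl}(ii) and has the form $\on{IC}(\widecheck\cO_{a,\mu},\cT_{\psi_a,\tau})$ with $\mu\in{}^0_a\Sigma$. The formula~\eqref{eqn-ind1} exhibits any such sheaf with $|\mu|<N$ as a summand of parabolic induction from the proper Levi $S(GL_{lkm}\times GL_{N-lkm})$. Cuspidality therefore forces $|\mu|=N$, i.e.\ the support is a distinguished nilpotent orbit whose partition fills $V$. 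A separate induction from Levis of type $S(GL_a^{\times N/a})$ eliminates $a<N$: for such $a$ and $|\mu|=N$, the sheaf $\on{IC}(\cO_\mu,\cE_{\psi_a})$ arises (by a computation parallel to the corollary of Lemma~\ref{lem-rn}) from inducing the external product of $N/a$ copies of $\on{IC}$ on the regular nilpotent orbit of $GL_a$. Hence $a=N$, and then Lemma~\ref{lem-rn} together with the component-group formula~\eqref{eqn-comp} forces $\mu=(N)_i$, so $\cO_\mu=\cO_{reg}$, which exists only in the $N$-regular case.

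\textbf{Part (ii) when $m\mid N$.} If $\bfd\neq(N/m)\mathbf{1}_m$, the dimension vector is unbalanced and~\eqref{eqn-ind1} still applies with a nontrivial Levi for every support, yielding no cuspidals. Assume $\bfd=(N/m)\mathbf{1}_m$. The supports with $|\mu|<N$ are non-cuspidal as above, so the only remaining candidate support is $\widecheck\cO_{a,\emptyset}=\widecheck\cO_{(N/m)_1\cdots(N/m)_m}$, covered by Theorem~\ref{cs-sl}(i) whenever $m\mid a$. Writing $a=dN/m$ with $d\mid m$, the arithmetic condition $\gcd(N/m,m/d)=1$ would emerge from demanding that the generalized nearby cycle sheaf of Section~\ref{sec-cs-sl} actually realize a character sheaf in central character $a$ (this is implicit in the hypotheses of the proposition preceding~\eqref{char-nby}). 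The restriction $\tau\in\cP_d(1)$ captures that the irreducible Hecke-algebra representations $L_\tau$ which are not parabolically induced from a proper reflection subgroup of $G_{m,1,l}$ occur precisely at $l=1$; for $l>1$, any decomposition of $L_\tau$ as an induced representation would lift to a parabolic induction of character sheaves from a finer $\theta$-stable Levi, as in the derivation of~\eqref{eqn-ind1}.

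\textbf{Main obstacle.} The central difficulty is the converse direction: showing that every non-listed candidate is truly obtained from parabolic induction from a proper $\theta$-stable Levi. In part (i), the case $|\mu|=N$ with $a<N$ requires an explicit induction construction generalizing the corollary to Lemma~\ref{lem-rn}, and a careful accounting of which character of $\bZ/\check d_\lambda\bZ$ appears on the induced summand. In part (ii), one must transport Howlett--Lehrer-type cuspidality theory from Hecke algebras of $G_{m,1,l}$ to the geometric setting via the nearby cycle construction, making critical use of transitivity of parabolic induction. A related subtlety is the emergence of the arithmetic condition $\gcd(N/m,m/d)=1$, which is not manifest in Theorem~\ref{cs-sl}(i) and must be extracted by analyzing precisely when the central character $a=dN/m$ actually occurs on $\fF P_{\widecheck\cO,\chi}$.
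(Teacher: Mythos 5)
The statement you are asked about is Conjecture~\ref{conj-cusp}: the paper does not prove it, and you should be upfront that no complete argument exists there. What the paper actually offers is partial evidence — Lemma~\ref{lem-rn} (cuspidality of $\on{IC}(\cO_{reg},\cE_{\psi_N})$ when $m\nmid N$, by a central-character count), the Corollary after it, the induction identities~\eqref{eqn-ind1} and~\eqref{eqn-ind2} — together with the closing remark that Conjecture~\ref{conj-cusp} \emph{would} follow from~\eqref{eqn-ind1},~\eqref{eqn-ind2}, and the unproven expectation in Remark~\ref{rmk-nilp}. Your proposal aims at an outright proof; it is broadly in the paper's spirit (eliminate candidates via explicit parabolic induction, keep what survives), but it does not isolate the hypotheses the paper flags as open: Conjecture~\ref{conj-nilp} (assumed throughout Theorem~\ref{cs-sl}) and the Remark~\ref{rmk-nilp} expectation that nilpotent-support sheaves with character $\psi_a$ are induced from a regular-nilpotent sheaf on $S(GL_a^{\times N/a})$. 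Any route to Conjecture~\ref{conj-cusp} via~\eqref{eqn-ind1},~\eqref{eqn-ind2} must pass through exactly this ingredient.

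Concretely, in part (i) your "separate induction from Levis of type $S(GL_a^{\times N/a})$" for the nilpotent-support case is precisely Remark~\ref{rmk-nilp}; the computation you cite (the Corollary after Lemma~\ref{lem-rn}) only treats the rectangular $\mu=a^t_i$, not an arbitrary $\mu\in{}^0_a\Sigma_{m,\bfd}$, so it does not close this gap. In part (ii), your appeal to a Howlett--Lehrer-type cuspidality theory for $\cH_{G_{m,1,l}}$ is not how the paper proceeds and introduces machinery that is not needed: the restriction to $l=1$ is already geometric, since~\eqref{eqn-ind2} exhibits every $\on{IC}(\widecheck\cO_{a,\emptyset},\cT_{\psi_a,\tau})$ as a summand of induction from $S(GL_{mk}^{\times l})$, which is a proper Levi as soon as $l>1$. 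Likewise, $\tau\in\cP_d(1)$ is not a Hecke-cuspidality constraint; it reflects that the relevant algebra is $\cH^d_{G_{m,1,1}}$ with $d$ simples (cf.\ the Proposition preceding~\eqref{char-nby}, whose hypotheses also give the arithmetic condition $\gcd(N/m,m/d)=1$, as you correctly suspected). Finally, your dismissal of $\bfd\neq(N/m)\mathbf{1}_m$ in part (ii) as "unbalanced" is too quick: the point is that then $\Lg_1^{rs}$ is not dense, so there is no $\mu=\emptyset$ stratum and every support is covered by the $\mu\neq\emptyset$ case of~\eqref{eqn-ind1} plus Remark~\ref{rmk-nilp}. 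Summing up: your strategy is compatible with the paper's conditional derivation, but without a proof of Remark~\ref{rmk-nilp} (and Conjecture~\ref{conj-nilp}) neither your argument nor the paper's establishes the statement, which is why it is posed as a conjecture.
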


\begin{remark}
 Note that when $m=N$ in (ii) we are in the (GIT) stable grading case. In that case the conjecture is compatible with~\cite{VX} where all cuspidal sheaves are expected to have full support.
 \end{remark}
 
As an evidence of the above conjecture, we show that the character sheaves in~\eqref{char-nby} can be obtained from parabolic induction of a $\theta$-stable Levi subgroup of the form $S(GL_{mk}^{\times l})$ contained in a $\theta$-stable parabolic subgroup $P$. We have $d_i=lk$, $i\in[1,m]$. Let $e_i^j,\,j\in[1,lk]$ be a basis of $M_i$, $i\in[1,m]$. Let $P$ be the $\theta$-stable parabolic subgroup of $G$ that stablises the flag $0\subset V_1\subset V_2\subset\cdots\subset V_{l-1}\subset V_l=V$, where $V_s=\on{span}\{e_i^j,\,i\in[1,m],\,j\in[1,sk]\}$. Let $W_s=\on{span}\{e_i^j,\,i\in[1,m],\,j\in[(s-1)k+1,sk]\}$, $s\in[1,l]$, and $L\cong S(\prod_{s=1}^lGL_{W_s})$ the $\theta$-stable Levi subgroup of $P$. Consider the stratum $\widecheck\cO_{\Ll_1}:=(\widecheck{\cO}_{k_1k_2\cdots k_m})^{\boxtimes l}$. We have $\pi_1^{L^\theta}(\widecheck\cO_{\Ll_1})\cong\bZ/mk\bZ\times B_{G_{m,1,1}}^{\times l}$. Consider the $L^\theta$-equivariant local system $\cL_{\psi_a,\tau_1,\ldots,\tau_l}$ on $\widecheck\cO_{\Ll_1}$ given by the irreducible representation $\psi_a\boxtimes L_{\tau_1}\boxtimes L_{\tau_2}\boxtimes\cdots\boxtimes L_{\tau_l}$, where $\psi_a\in(\widehat{\bZ/mk\bZ})_a$ and $\tau_i\in\cP_d(1)$. Then $\on{IC}(\widecheck\cO_{\Ll_1},\cL_{\psi_a,\tau_1,\ldots,\tau_l})\in\on{Char}_{L^\theta}(\Ll_1)_a$. Consider the map $\pi:K\times^{P_K}(\widecheck\cO_{\Ll_1}+(\Ln_P)_1)\to\Lg_1$. One checks that $\dim K/P_K=\on{dim}(\Ln_P)_1=mk^2l(l-1)/2$, $\on{dim}\widecheck\cO_{\Ll_1}=l(mk^2-k+1)$, and $\on{dim}\widecheck\cO_{a,\emptyset}=l(mk^2l-k+1)$. Moreover, the fiber $\pi^{-1}(x)\cong S_l$, the symmetric group of $l$ letters, for $x\in\widecheck\cO_{a,\emptyset}$. It follows that the image of $\pi$ equals the closure of $\widecheck\cO_{a,\emptyset}$.  Entirely similar argument as in~\cite{VX4} proves that
\beq\label{eqn-ind2}
\bigoplus_{(\tau_i)\in\cP_d(1)^l}\on{Ind}_{\Ll_1\subset\fp_1}^{\Lg_1}\on{IC}(\widecheck\cO_{\Ll_1},\cL_{\psi_a,\tau_1,\ldots,\tau_l})=\bigoplus_{\tau\in\cP_d(l)}\on{IC}(\widecheck\cO_{a,\emptyset},\cT_{\psi_{a},\tau})\oplus\cdots\text{ (up to shift).}
\eeq

\begin{remark}If the expectation in Remark~\ref{rmk-nilp} holds, then Conjecture~\ref{conj-cusp} follows from~\eqref{eqn-ind1} and~\eqref{eqn-ind2}. 
\end{remark}

\section{Character sheaves: type II}\label{cs-typeII}

In this section we describe the character sheaves in the case of type II graded Lie algebras, see Section~\ref{ssec-gla}. As in the previous section, we follow the approach in~\cite{VX3,VX4} and make use of the notations there.
\subsection{Supports of character sheaves} Recall the set of distinguished orbits defined in Section~\ref{ssec-dist}. Consider the following set $\underline{\cN_{-1}}^{cs}$ of nilpotent $K$-orbits in $\cN_{-1}$
\bern
\cO_{k,\mu}:=\cO_{1^{2k}_11^{2k}_2\cdots 1^{2k}_{m_0}\sqcup\mu},\ \mu\in{}^0\Sigma_{m_0,\bfd-2k\mathbf{1}_{m_0}}^A&\text{  AII}\\
\cO_{k,\mu}:=\cO_{1^{2k}_11^{2k}_2\cdots 1^{2k}_{m}\sqcup\mu},\ \mu\in{}^0\Sigma_{m,\bfd-2k\mathbf{1}_m}^C\text{ (resp. ${}^0\Sigma_{m,\bfd-2k\mathbf{1}_m}^D$)}&\text{  CII (resp. DII)}.
\eern
 One checks that
\bern
\pi_1^K(\widecheck\cO_{k,\mu})\cong B_{G_{m_0,1,k}} \text{ (resp. $B_{G_{m,1,k}}$)  AII (resp.  CDII)}.
\eern

\subsection{Nilpotent support character sheaves}Let us write ${}^0\Sigma$ for ${}^0\Sigma_{m_0,\bfd}^A$, ${}^0\Sigma_{m,\bfd}^{C}$ or ${}^0\Sigma_{m,\bfd}^{D}$. 
\begin{conjecture}\label{conj-biorbital}
 We have \beq\label{cs-nilp2}
\on{Char}_K^{\rn}(\Lg_1)=\{\on{IC}(\cO_{\mu},\bC)\mid \mu\in{}^0\Sigma\}.
\eeq

\end{conjecture}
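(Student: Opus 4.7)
By \eqref{comp-2}, $A_K(x) = 1$ for every nilpotent $x$, so $\cA_K(\cN_{\pm 1})$ consists precisely of the IC sheaves $\on{IC}(\cO_\lambda, \bC)$ for $\lambda$ in the relevant type II Young-diagram set, and Fourier transform gives a bijection $\fF : \cA_K(\cN_{-1}) \to \on{Char}_K(\Lg_1)$. The conjecture thus reduces to two subclaims: (i) $\on{supp}\,\fF(\on{IC}(\cO_\lambda,\bC)) \subseteq \cN_1$ iff $\cO_\lambda$ is distinguished, and (ii) in that case $\fF(\on{IC}(\cO_\lambda,\bC)) = \on{IC}(\cO_{\widecheck\lambda},\bC)$ with trivial local system, $\widecheck\lambda$ as in \eqref{checkmu}.

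For (i), I would invoke the dual stratum framework recalled earlier in Section~\ref{sec-nilp}: the support of $\fF(\on{IC}(\cO_\lambda,\bC))$ is $\overline{\widecheck\cO_\lambda}$. By construction the dual stratum is built from $\Lg_1^x$-type transverse slices over $x \in \cO_\lambda$, so $\widecheck\cO_\lambda \subset \cN_1$ precisely when $\Lg_1^x \subset \cN_1$, i.e.\ when $\cO_\lambda$ is distinguished in the sense of Section~\ref{ssec-dist}. Combined with the fact that $\lambda \mapsto \widecheck\lambda$ restricts to a bijection ${}^0\Sigma_{-1} \to {}^0\Sigma_1$, this forces $\mu = \widecheck\lambda \in {}^0\Sigma$, with triviality of the local system automatic from $A_K = 1$.

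For (ii), following the introduction's remark that nilpotent support character sheaves in type II should arise from parabolic induction of $\theta$-stable Borels, I would realise each $\on{IC}(\cO_\mu,\bC)$, $\mu \in {}^0\Sigma$, as a direct summand of such an induction. Let $B = TU$ be a $\theta$-stable Borel. The only $T^\theta$-orbital complex supported on $\cN_{-1} \cap \Lt_{-1} = \{0\}$ is the skyscraper $\bC_0$, whose Fourier transform is $\bC_{\Lt_1}[\dim \Lt_1]$. Hence the graded Grothendieck--Springer sheaf
\beqn
\on{GS}_B := (\pi_B)_!\, \bC_{\widetilde{\Lg_1}}[\dim \widetilde{\Lg_1}], \qquad \pi_B \colon K \times^{B_K} \Lb_1 \to \Lg_1,
\eeqn
lies in $\on{Char}_K(\Lg_1)$. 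I would then decompose $\on{GS}_B|_{\cN_1}$ using the quiver description of $(K,\Lg_1)$ in Section~\ref{ssec-gla} and the nilpotent classification of Section~\ref{sec-nilp}, showing that as $B$ ranges over $K$-conjugacy classes of $\theta$-stable Borels compatible with $\bfd$, every $\on{IC}(\cO_\mu,\bC)$ with $\mu \in {}^0\Sigma$ appears with multiplicity one and trivial coefficients. The total count is matched against $|{}^0\Sigma_{-1}| = |{}^0\Sigma_1|$ via Lemma~\ref{lem-nb2}, closing both inclusions.

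The main obstacle is the Springer-fiber analysis in (ii): one must show that $\pi_B^{-1}(x)$ over a distinguished nilpotent $x \in \cO_\mu$ has the expected dimension and is \emph{connected}, so that only the trivial local system contributes, and that no additional summands supported on non-distinguished orbits appear. The combinatorics of the quiver presentations in Section~\ref{ssec-gla} make this tractable in each of types AII, CII, DII individually, but coordinating a uniform argument across the three subtypes — each with its own parity and pairing constraints on $\bfd$ — is where the real work lies.
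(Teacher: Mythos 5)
The statement you are asked to prove is labelled a \emph{conjecture} in the paper, and the paper does not prove it. Immediately after stating Conjecture~\ref{conj-biorbital}, the paper records only one inclusion, $\on{Char}^{\rn}_K(\Lg_1)\subset\{\on{IC}(\cO_\mu,\bC)\mid\mu\in{}^0\Sigma\}$, by citing Lusztig \cite[\S3.8]{L}. The reverse inclusion is left open; Remark~\ref{rmk-nil2} only \emph{expects} it to follow from showing that every $\cO_\mu$, $\mu\in{}^0\Sigma$, is a Richardson orbit attached to a $\theta$-stable Borel, and this is not carried out. So comparing your attempt against ``the paper's own proof'' has to start from the fact that there is no such proof in the paper.

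Within your attempt there is a genuine circularity in part (i). You write that you ``would invoke the dual stratum framework'' to assert $\on{supp}\,\fF(\on{IC}(\cO_\lambda,\bC))=\overline{\widecheck\cO_\lambda}$. But Section~\ref{sec-nilp} merely \emph{defines} $\widecheck\cO_\lambda$ combinatorially and asserts, without proof in this paper, that supports of character sheaves take this form; identifying the Fourier transform's support with the combinatorially defined dual stratum is precisely (one direction of) what the conjecture asks for. You cannot use that identification as an input. In fact part (i) is not even needed: given the inclusion from \cite[\S3.8]{L}, establishing the reverse inclusion $\{\on{IC}(\cO_\mu,\bC)\mid\mu\in{}^0\Sigma\}\subset\on{Char}_K(\Lg_1)$ with nilpotent support already closes the conjecture, since $\fF$ is a bijection on simple perverse sheaves. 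Your part (ii) is aimed exactly at that and coincides with the paper's own suggested route (Remark~\ref{rmk-nil2}): induce the skyscraper from $\Lt_1=\{0\}$ along a $\theta$-stable Borel and argue that every $\cO_\mu$, $\mu\in{}^0\Sigma$, is Richardson. However, you do not carry out the Springer-fiber and multiplicity analysis, and you yourself flag it as ``where the real work lies.'' That is precisely the work the paper also leaves undone; until it is done, neither the paper nor your proposal contains a proof of~\eqref{cs-nilp2}.
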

Note that by~\cite[\S3.8]{L}, we have
 $\on{Char}^{\rn}_K(\Lg_1)\subset\{\on{IC}(\cO_\mu,\bC)\mid\mu\in{}^0\Sigma\}$.

Let $B$ be a $\theta$-stable Borel subgroup. By~\cite[3.2(c)]{L}, we have $\Lb_i=(\Ln_B)_i$ for $i\neq 0$ in type CDII. One can check that this holds as well in type AII.  Let $\pi_B:K\times^{B_K}\Lb_1\to\Lg_1$. We have $\on{Im}\pi_B=\overline{\cO_B}$ for some nilpotent orbit $\cO_B\subset\cN_1$. As in~\cite{VX4}, we call the orbits $\cO_B$ Richardson orbits attached to $\theta$-stable Borel groups. 
\begin{remark}\label{rmk-nil2}We expect that all the sheaves in~\eqref{cs-nilp2} can be obtained from parabolic induction $\on{Ind}_{\Lt_1=\{0\}\subset\Lb_1}^{\Lg_1}\delta$ of the skyscraper sheaf $\delta$ on $\Lt_1=\{0\}$, where $\Lt=\on{Lie}T$,  and $T$ is a $\theta$-stable maximal torus contained in a $\theta$-stable Borel subgroup $B$. It is likely that every $\cO_{\mu}$,  $\mu\in{}^0\Sigma$, is a Richardson orbit attached to some $\theta$-stable Borel subgroup, which will then imply the expectation.
\end{remark}

\subsection{Character sheaves}
We give an explicit description of the character sheaves assuming Conjecture~\ref{cs-nilp2} holds.

Let $\rho\in\cP(k)$. Recall the irreducible representation $L_\rho$ of $B_{G_{m,1,k}}$. Let us write $\cL_\rho$ for the $K$-equivariant local system on $\widecheck\cO_{k,\mu}$ given by the irreducible representation $L_\rho$ of $\pi_1^{K}(\widecheck\cO_{k,\mu})$.
\begin{theorem}\label{cs}Suppose that Conjecture~\ref{conj-biorbital} holds. We have
\beq\label{eqn-csII}
\on{Char}_K(\Lg_1)=\{\on{IC}(\widecheck\cO_{k,\mu},\cL_\rho)\mid \cO_{k,\mu}\in \underline{\cN_1}^{cs},\,\rho\in\cP(k)\}.
\eeq
\end{theorem}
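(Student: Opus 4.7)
The plan is to follow the strategy of Theorem~\ref{cs-sl} in two steps: (1) set up a bijection between the simple nilpotent orbital complexes $\cA_K(\cN_{-1})$ and the right-hand side of~\eqref{eqn-csII}; (2) realise each proposed sheaf as a summand of a parabolic induction of an explicit character sheaf on a $\theta$-stable Levi subgroup. By~\eqref{comp-2} the component groups are trivial, so every nilpotent orbital complex is of the form $\on{IC}(\cO_\lambda,\bC)$ with $\lambda$ ranging over $\Sigma^A_{m_0,\bfd}$, $\Sigma^C_{m,\bfd}$, or $\Sigma^D_{m,\bfd}$ according to type.

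For the bijection, given $\lambda$ written as in~\eqref{yda}, I peel off cyclic blocks row-length by row-length. The parity and pairing conditions~\eqref{orbit-1}--\eqref{orbit-3} force the admissible atomic reduction of $(p_i^j)_j$ to consist of two cyclic blocks at a time (always in type AII; in types CII, DII when $\lambda_i$ has a suitable parity, dictated by the parity constraint of that type). The maximal such reduction produces a distinguished diagram $\mu_\lambda\in{}^0\Sigma$ together with a partition $\rho_\lambda$ recording the peeled blocks with their multiplicities, and $\bfd(\mu_\lambda)=\bfd-2k_\lambda\mathbf{1}$, $\rho_\lambda\in\cP(k_\lambda)$. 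That $\lambda\mapsto(\mu_\lambda,\rho_\lambda,k_\lambda)$ is a bijection follows directly on comparing Lemmas~\ref{lem-nb1} and~\ref{lem-nb2}: in each type the ratio of the two generating functions there equals the generating series for partitions, after the substitution $x\mapsto x^{m_0}$ (type AII) or $x\mapsto x^{m}$ (types CDII).

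To construct the character sheaves, I treat first the polar case $\bfd=2k\mathbf{1}$ with $\mu=\emptyset$. Here $(K,\Lg_1)$ is stable polar with little Weyl group $G_{m_0,1,k}$ (resp.\ $G_{m,1,k}$) and $\widecheck\cO_{k,\emptyset}=\Lg_1^{rs}$. Applying the nearby cycle construction of~\cite{GVX} together with the rank-$1$ stable calculation in each type (analogous to~\cite[\S6.1]{VX}) produces character sheaves on $\Lg_1^{rs}$ whose monodromy carries the Hecke algebra attached to the little Weyl group, giving $\on{IC}(\widecheck\cO_{k,\emptyset},\cL_\rho)$ for every $\rho\in\cP(k)$. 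For general $\mu\neq\emptyset$ I would choose a $\theta$-stable parabolic subgroup $P$ with $\theta$-stable Levi $L$ corresponding to a flag decomposition of $V$ into a polar summand of dimension vector $2k\mathbf{1}$ and a transverse orthogonal/symplectic summand carrying $\mu$. On $\Ll_1$ the external product of the previously constructed sheaf with $\on{IC}(\cO_\mu,\bC)$ lies in $\on{Char}_{L^\theta}(\Ll_1)$ by the polar case together with Conjecture~\ref{conj-biorbital}, and a dimension-and-fibre calculation for the induction map $\pi\colon K\times^{P_K}(\widecheck\cO_{\Ll_1}+(\Ln_P)_1)\to\Lg_1$, parallel to the derivation of~\eqref{eqn-ind1}, extracts $\on{IC}(\widecheck\cO_{k,\mu},\cL_\rho)$ as the distinguished summand of the parabolic induction.

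The principal obstacle will be the rank-$1$ stable computation in each of types AII, CII, DII needed to identify the Hecke algebra representation appearing in the nearby cycle sheaf, which is the analogue of the type-AI computation in~\cite[\S6.1]{VX}. A secondary technical point is verifying that $\pi$ is generically one-to-one over $\widecheck\cO_{k,\mu}$, so that $\on{IC}(\widecheck\cO_{k,\mu},\cL_\rho)$ occurs with the correct multiplicity as the distinguished summand; combined with the bijective count from Lemmas~\ref{lem-nb1}--\ref{lem-nb2}, this completes the proof that no further character sheaves arise.
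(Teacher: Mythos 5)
Your overall skeleton (set up a bijection with $\cA_K(\cN_{-1})$, then realise each proposed sheaf via parabolic induction) matches the paper, and your bijection via peeling off pairs of cyclic blocks is essentially what the paper writes down (with $l_i=\min\{[p_i^a/2]\}$). The main divergence is in how the full-support ``polar'' sheaves $\on{IC}(\Lg_1^{rs},\cL_\rho)$ (case $\bfd=2k\mathbf{1}$, $\mu=\emptyset$) are obtained. You propose to apply the nearby-cycle construction of~\cite{GVX} directly to the type~II pair, which would require carrying out new rank-$1$ stable computations in types AII, CII, DII analogous to~\cite[\S6.1]{VX}; you correctly identify this as the principal obstacle. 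The paper sidesteps this entirely: inside a type~II pair with $\bfd=2r\mathbf{1}$ it chooses a $\theta$-stable parabolic with Levi $L\cong S(GL_W\times GL_U)$ (with $W$, $U$ dual isotropic subspaces) so that $(L^\theta,\Ll_1)$ is identified with the cyclic-quiver pair $(GL_r^{\times m_0},\on{Hom}(\bC^r,\bC^r)^{\oplus m_0})$, i.e.\ the (GL-version of the) type AI inner case, for which the nearby-cycle result is already available. A single parabolic induction then produces~\eqref{ind1}. So the paper's route buys you the base case for free from type AI, whereas yours would demand new computations whose outcome (in particular, which quotient Hecke algebra of $B_{G_{m_0,1,r}}$ or $B_{G_{m,1,r}}$ appears) you would still need to pin down. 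For the general $\mu\ne\emptyset$ case the two approaches coincide: a three-block Levi $S(GL_W\times GL_U\times GL_E)$, an external product of the polar sheaf with $\on{IC}(\cO_\mu,\bC)$ (using Conjecture~\ref{conj-biorbital}), and a dimension/fibre computation for the induction map. One small remark: you invoke Lemmas~\ref{lem-nb1}--\ref{lem-nb2} to get bijectivity; the paper's map is directly constructive with an evident inverse, so the generating-function comparison, while a nice sanity check, is not actually used.
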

Recall $r=\on{min}d_i$. Let $\mu_0=1^{k_1}_1\cdots 1^{k_{m_0}}_{m_0}$ (resp. $1^{k_1}_1\cdots 1^{k_m}_m$), where $k_i=d_i-2r$.
\begin{corollary}We have
\beqn
\on{Char}_K^{\rf}(\Lg_1)=\{\on{IC}(\widecheck\cO_{r,\mu_0},\cL_\rho)\mid \rho\in\cP(r)\}.
\eeqn
\end{corollary}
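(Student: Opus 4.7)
The plan is to apply Theorem~\ref{cs} and single out the character sheaves whose support equals $\Lg_1$.

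By Theorem~\ref{cs}, every character sheaf has the form $\on{IC}(\widecheck\cO_{k,\mu}, \cL_\rho)$, with support $\overline{\widecheck\cO_{k,\mu}}$. Hence full support is equivalent to $\widecheck\cO_{k,\mu}$ being the open dense stratum of $\Lg_1$. By the dual stratum construction recalled in Section~\ref{sec-nilp} (following~\cite[\S3.1]{VX4}), the dual strata partition $\Lg_1$, and the unique open dense one is the regular semisimple locus $\Lg_1^{rs}$, namely the dual stratum $\widecheck{\{0\}}$ of the zero orbit $\{0\}\subset\cN_{-1}$ (taking $e=h=0$ yields $\mathfrak{z}_{\Lg_1}(0)^{rs}=\Lg_1^{rs}$). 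So the problem reduces to identifying all pairs $(k,\mu)\in \underline{\cN_1}^{cs}$ with $\cO_{k,\mu}=\{0\}$.

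Next, I would unwind the combinatorics. The zero orbit in $\cN_1$ corresponds to the Young diagram $1^{d_1}_1\cdots 1^{d_m}_m$ (all rows of length $1$, distributed according to $\bfd$). Matching this against $1^{2k}_1\cdots 1^{2k}_m\sqcup\mu$ forces $\mu = 1^{k_1}_1\cdots 1^{k_m}_m$ with $k_i = d_i-2k$. Imposing $\mu\in{}^0\Sigma$ gives $\min\{k_i\}\leq 1$, while $k_i\geq 0$ gives $k\leq\lfloor d_i/2\rfloor$ for all $i$; together these pin down $k=r=\min\lfloor d_i/2\rfloor$ and thus $\mu=\mu_0$. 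Conversely, one checks that $(r,\mu_0)\in\underline{\cN_1}^{cs}$: the type-dependent symmetry conditions on the $k_i$ are inherited from the built-in duality of $\bfd$ (e.g.\ $d_{l+1}$ is even in type AII and $d_l,d_m$ are even in type CII), and $\min\{d_i-2r\}\in\{0,1\}$ by the definition of $r$. This yields exactly $\{\on{IC}(\widecheck\cO_{r,\mu_0},\cL_\rho)\mid\rho\in\cP(r)\}$.

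The only substantive point is the identification $\widecheck{\{0\}}=\Lg_1^{rs}$ as the open dense stratum, which is immediate from the construction. As a consistency check, $\pi_1^K(\widecheck\cO_{r,\mu_0})$ as recorded in the preceding subsection equals $B_{G_{m_0,1,r}}$ in type AII (resp.\ $B_{G_{m,1,r}}$ in type CDII), which is $B_{W_\fa}$; so the indexing set $\cP(r)$ matches the expected parametrization of full-support character sheaves by irreducible representations of the braid group $B_{W_\fa}$ factoring through $\cH^1_{W_\fa}$. Everything else is notational unpacking, so I do not expect any genuine obstacle beyond this appeal to the dual stratum structure.
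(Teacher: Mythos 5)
Your proposal is correct and supplies exactly the details that the paper leaves implicit: the corollary is stated with no separate proof as an immediate consequence of Theorem~\ref{cs}, and the intended reasoning is precisely the combinatorial unwinding you carry out (full support $\Leftrightarrow$ dense dual stratum $\Leftrightarrow$ $\cO_{k,\mu}=\{0\}$ $\Leftrightarrow$ $(k,\mu)=(r,\mu_0)$ with $r=\min\lfloor d_i/2\rfloor$; note the paper's ``$r=\min d_i$'' in the corollary's preamble is a typo, and you use the correct value). One small imprecision worth flagging: the identification of the open dense dual stratum $\widecheck{\{0\}}$ with the regular semisimple locus $\Lg_1^{rs}$ holds literally only in the stable case $\bfd=2r\mathbf{1}$; in general $\widecheck{\{0\}}$ properly contains $\Lg_1^{rs}$, consisting of the $K$-saturation of elements whose semisimple part is regular in $\fa$ and whose nilpotent part is generic in the centraliser. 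Since your argument uses only that $\widecheck{\{0\}}$ is the unique open dense stratum in the partition $\Lg_1=\bigsqcup_{\cO}\widecheck\cO$, this does not affect the conclusion.
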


\subsection{Proof of Theorem~\ref{cs}}Let us write $I_d^A=\{1,\ldots,m_0\}$, $I_m^C=I_m^D=\{1,\ldots,m\}$. Let $\lambda$ be as in~\eqref{orbit-1} (resp.~\eqref{orbit-2},~\eqref{orbit-3}). We define
\beqn
l_i=\min\{[\frac{p_i^a}{2}],\,a\in I_{m_0}^A\text{ (resp. $I_m^C$, $I_m^D$)}\},\ q_i^a=p_i^a-2l_i,\,a\in I_d^A\text{ (resp. $I_m^C$, $I_m^D$)},\ i\in[1,s].
\eeqn
Define
\bern
&\nu=(\lambda_1)^{l_1}\cdots(\lambda_s)^{l_s},\ \mu=(\lambda_1)_1^{q_1^1}\cdots(\lambda_1)^{q_1^{m_0}}_{m_0}\cdots (\lambda_s)_1^{q_s^1}\cdots(\lambda_s)^{q_s^{m_0}}_{m_0}\\
&\text{(resp.  $\mu=(\lambda_1)_1^{q_1^1}\cdots(\lambda_1)^{q_1^{m}}_{m}\cdots (\lambda_s)_1^{q_s^1}\cdots(\lambda_s)^{q_s^{m}}_{m}$)}
\eern
The map
\beqn
\on{IC}(\cO_\lambda,\bC)\mapsto\on{IC}(\widecheck\cO_{|\nu|,\mu},\cL_\nu)
\eeqn
defines a bijection between the set $\cA_K(\cN_{-1})$ of simple $K$-equivariant perverse sheaves on $\cN_{-1}$ and the set of sheaves in Theorem~\ref{cs}. Thus to prove Theorem~\ref{cs}, it suffices to show that the sheaves in~\eqref{eqn-csII} are indeed character sheaves. We prove this for type AII. The other cases are entirely similar and we omit the details. 

We have $d_i=d_{m_0+1-i}$ and $d_{l+1}$ is even. Let $\{e_i^j,\,j\in[1,d_i]\}$ be a basis of $M_i$, $i\in[1,m_0]$ such that $(e_i^j,e_s^t)=\delta_{i+s,m_0+1}\delta_{j,t}$, $i\in[1,l]$,  $(e_{l+1}^j,e_{l+1}^t)=\delta_{j+t,d_{l+1}+1}$, $j\in[1,r]$.

We first consider the case when $d_i=2r$ for all $i$. 
We show that
\beq\label{ind1}
\{\on{IC}(\Lg_1^{rs},\cL_\rho)\mid\rho\in\cP(r)\}\subset\on{Char}_{K}(\Lg_1).
\eeq
 Let $P$ be the $\theta$-stable parabolic subgroup that stabilises the flag $0\subset W_{m_0r}=W_{m_0r}^{\perp}\subset V$, where $W_{m_0r}=\on{span}\{e_i^j,\,i\in[1,m_0],\,j\in[1,r]\}$. Let $L$ be the $\theta$-stable Levi subgroup contained in $P$ such that $L\cong S(GL_{W_{m_0r}}\times GL_{U_{m_0r}})$, where $U_{m_0r}=\on{span}\{e_i^j,\,i\in[1,m_0],\,j\in[r,2r]\}$. Then $(L^\theta,\Ll_1)$ can be identified with $(GL_r^{\times m_0},\on{Hom}(\bC^r,\bC^r)^{\oplus m_0})$. That is, the pair arising from the order $m_0$ inner automorphism of $GL_{m_0r}$ such that $d_i=r$, $i\in[1,m_0]$. Let $\tau\in\cP(r)$. Consider the map $\pi:K\times^{P_K}\Lp_1\to\Lg_1$. One checks readily that $\pi$ is surjective and $\pi^{-1}(x)\cong\bP_1^k$. Consider the sheaf $\on{IC}(\Ll_1^{rs},\cT_{\psi_1,\tau})\in\on{Char}_{L^\theta}(\Ll_1)$. Entirely similar argument as before shows that $\on{Ind}_{\Ll_1\subset\Lp_1}^{\Lg_1}\on{IC}(\Ll_1^{rs},\cT_{\psi_1,\tau})\cong\on{IC}(\Lg_1^{rs},\cL_\tau)\oplus\cdots$ (up to shift). This proves~\eqref{ind1}.

Returning to the general case. We assume that Conjecture~\ref{conj-biorbital} holds. Let $P$ be the $\theta$-stable parabolic subgroup that stabilises the flag $0\subset W_{m_0k}\subset W_{m_0k}^{\perp}\subset V$, where $W_{m_0k}=\on{span}\{e_i^j,\,i\in[1,l+1],\,j\in[1,k],e_i^j,\,i\in[l+2,m_0],\,j\in[k+1,2k]\}$. Let $L$ be the $\theta$-stable Levi subgroup contained in $P$ such that $L\cong S(GL_{W_{m_0k}}\times GL_{U_{m_0k}}\times GL_{E_{2n-2m_0k}})$, where $U_{m_0k}=\on{span}\{e_i^j,\,i\in[1,l],\,j\in[k,2k],e_i^j,\,i\in[l+2,m_0],\,j\in[1,k],\,e_{l+1}^j,\,j\in[a_{l+1}+1-k,a_{l+1}]\}$ and $E_{2n-2m_0k}=\on{span}\{e_i^j,\,i\in[1,l]\cup[l+2,m_0],\,j\in[2k+1,d_i],\,e_{l+1}^j,\,j\in[k+1,d_{l+1}-k]\}$. Let $\mu\in{}^0\Sigma_{2n-m_0k}$. Consider the stratum $\widecheck\cO_{\Ll_1}:=\widecheck\cO_{1^{2k}_1\cdots 1^{2k}_d}\times\cO_\mu$. We have $\pi_1^{L^\theta}(\widecheck\cO_{\Ll_1})\cong B_{G_{m_0,1,k}}$. Let $\tau\in\cP(k)$ and let $\cF_\tau$ denote the $K$-equivariant local system on $\widecheck\cO_{\Ll_1}$ given by the irreducible representation $L_\tau$ of $B_{G_{m_0,1,k}}$. Then $\on{IC}(\widecheck\cO_{\Ll_1},\cF_\tau)\in\on{Char}_{L^\theta}(\Ll_1)$ by~\eqref{ind1} and Conjecture~\ref{conj-biorbital}. Consider the map $\pi:K\times^{P_K}(\overline{\widecheck\cO_{\Ll_1}}+(\Ln_P)_1)\to\Lg_1$. Let $\widecheck\cO=\widecheck\cO_{1^{2k}_1\cdots 1^{2k}_{m_0}\sqcup\mu}$. Let $k_i=d_i-2k$. We have
$\dim\widecheck\cO-\dim\widecheck\cO_{\Ll_1}=m_0k^2+4k\sum_{i=1}^lk_i+2kk_{l+1}-k$, $\dim K/P_K=lk^2+\frac{k^2+k}{2}+2k\sum_{i=1}^lk_i+kk_{l+1}$, and $\dim (\Ln_P)_1=lk^2+\frac{k^2-k}{2}+2k\sum_{i=1}^lk_i+kk_{l+1}.
$ Moreover, for $x\in\widecheck\cO$, $\pi_1^{-1}(x)\cong\bP_1^k$. We conclude that $\on{Im}\pi=\overline{\widecheck\cO}$ and 
\beq\label{eqn-ind3}
\on{Ind}_{\Ll_1\subset\Lp_1}^{\Lg_1}\on{IC}(\widecheck\cO_{\Ll_1},\cF_\tau)\cong\on{IC}(\widecheck\cO_{k,\mu},\cL_\tau)\oplus\cdots\text{ (up to shift).}
\eeq
 This completes the proof of Theorem~\ref{cs} for type AII.
\begin{remark}If the expectation in Remark~\ref{rmk-nil2} holds, then together with~\eqref{eqn-ind3}, it implies that there are no cuspidal character sheaves in type II.\end{remark}

\end{document}